\newtheorem{thm}{Theorem}[section]
\newtheorem{lem}[thm]{Lemma}
\newtheorem{rema}[thm]{Remark}
\numberwithin{equation}{section}
\newcommand{\Zag}{\mathscr{L}}
\newcommand{\M}{\mathcal{M}}
\newcommand{\al}{\alpha}
\newcommand{\DS}{\mathcal{D}}
\newcommand{\DSV}{\mathcal{D}}
\providecommand{\arcosh}{\operatorname{arcosh}}
\newcommand{\TM}{\vartheta}
\newcommand{\Res}{\mathcal{R}}
\newcommand{\STM}{\Theta}
\newcommand{\W}{\mathcal{W}}
\providecommand{\sym}{\operatorname{sym}}
\newcommand{\Mod}[1]{\ (\textup{mod}\ #1)}
\providecommand{\sgn}{\operatorname{sgn}}
\begin{document}

\title[The second moment of Maass form symmetric-square $L$-functions]{The second moment of Maass form symmetric square $L$-functions at the central point}

\author[D. Frolenkov]{Dmitry  Frolenkov}
\address{
Steklov Mathematical Institute of Russian Academy of Sciences, 8 Gubkina st., Moscow, 119991, Russia}
\email{frolenkov@mi-ras.ru}

\begin{abstract}
Recently R.Khan  and M.Young proved a mean Lindel\"{o}f estimate on the second moment of central values of Maass form symmetric-square $L$-function on the interval $T<|t_j|<T+T^{1/5+\epsilon}$, where $t_j$ is a spectral parameter of the Maass form. We provide another proof of this result.
\end{abstract}

\keywords{$L$-functions, symmetric square, moments, Maass forms}
\subjclass[2010]{Primary: 11F12, 11F37, 33C05}

\maketitle


\section{Introduction}
Denote by $u_j$ a Hecke-Maass cusp form with Laplace eigenvalue $\kappa_{j}=1/4+t_{j}^2$.  Khan and Young proved  the following result (see \cite[Theorem 1.3]{KhYoung}).
\begin{thm}\label{KY result}
For $G>T^{1/5+\epsilon}$ and $0\le U\ll T^{\epsilon}$ one has
\begin{equation}\label{mean Lindelof}
\sum_{T<t_j\le T+G}\alpha_{j}|L(\sym^2 u_{j},1/2+iU)|^2\ll T^{1+\epsilon}G.
\end{equation}
\end{thm}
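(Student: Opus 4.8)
The plan is to prove \eqref{mean Lindelof} by inserting an approximate functional equation into the Kuznetsov trace formula, reading off the main term from the diagonal, and extracting cancellation from the remaining terms by Poisson summation; the hypothesis $G>T^{1/5+\epsilon}$ will be exactly the point at which this argument succeeds.

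First I would smooth the spectral sum by a test function $h=h_{T,G}$ that is even, holomorphic and of rapid decay in $|\mathrm{Im}\,t|\le 1/2+\epsilon$, with $h(t)\gg 1$ on $[T,T+G]$, negligible outside $[T-GT^\epsilon,\,T+G+GT^\epsilon]$, and with controlled integral transforms; it then suffices to bound $\sum_j\alpha_j h(t_j)|L(\sym^2 u_j,1/2+iU)|^2$. Since the coefficients are real, $|L(\sym^2 u_j,1/2+iU)|^2=L(\sym^2 u_j,1/2+iU)L(\sym^2 u_j,1/2-iU)$, and the analytic conductor of $L(\sym^2 u_j,s)$ at $s=1/2$ is $\asymp t_j^2$. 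Applying the approximate functional equation to each factor, and using the factorization $L(\sym^2 u_j,s)=\zeta(2s)\sum_{a}\lambda_j(a^2)a^{-s}$, one writes $|L(\sym^2 u_j,1/2+iU)|^2$, up to a negligible error, as a smooth bilinear form $\sum_{a,c\ll T^{1+\epsilon}}(ac)^{-1/2}\lambda_j(a^2)\lambda_j(c^2)\,W(a,c;t_j,U)$, the two square variables produced by the $\zeta(2s)$-factors having been absorbed into the weight $W$.

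Now sum over $j$ and apply Kuznetsov to $\sum_j\alpha_j h(t_j)\lambda_j(a^2)\lambda_j(c^2)$ (after a Mellin separation of the mild $t_j$-dependence of $W$). This produces three contributions. The diagonal $a=c$ gives $\ll T^{1+\epsilon}G$: the $a$-sum collapses to $\sum_{a\ll T^{1+\epsilon}}a^{-1}$ with harmless divisor weights, which is $(\log T)^{O(1)}$, multiplied by the spectral integral $\int h(t)\,t\tanh(\pi t)\,dt\asymp TG$ — this is the source of the right-hand side of \eqref{mean Lindelof}. The continuous-spectrum term is an integral over $t$ whose $a,c$-sum reassembles $|L(\sym^2 E_t,1/2+iU)|^2=|\zeta(1/2+iU)\zeta(1/2+iU+2it)\zeta(1/2+iU-2it)|^2$, weighted by $h(t)\,t/|\zeta(1+2it)|^2$. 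The discrete off-diagonal is a sum over moduli $q$ of Kloosterman sums $S(a^2,c^2;q)$ against $h^+_{T,G}(4\pi ac/q)$, where $h^+_{T,G}$ is the Bessel transform of $h$.

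The heart of the matter is the off-diagonal term and, in the same way, the continuous-spectrum term: bounding the Kloosterman term by Weil's inequality alone, or the continuous term by the fourth moment of $\zeta$ on the short interval $[2T,2T+O(G)]$, wastes a power of $T$ — indeed the fourth moment is not known to be of the conjectured size on intervals this short — so genuine cancellation is needed. The key inputs are a stationary-phase analysis of $h^+_{T,G}$, showing it is negligible unless $4\pi ac/q\gg TG$ (so the effective range of the modulus is $q\ll ac/(TG)\ll T^{1+\epsilon}/G$) and that where it is not negligible $h^+_{T,G}(x)\asymp (TG/\sqrt x)\,e(\pm x/2\pi+\cdots)$; the $t$-integral plays the analogous localizing role in the continuous term. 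One then applies Poisson summation in $a$, with $c$ and $q$ fixed: after completing the square, the leading oscillation $e(\pm 2ac/q)$ of $h^+_{T,G}$ combines with the Kloosterman phase $e(a^2\overline x/q)$ so as to cancel, at the dominant dual frequency $\approx\pm 2c$, the companion phase $e(c^2\overline x/q)$ in $S(a^2,c^2;q)$; the surviving complete exponential sums modulo $q$ are then handled by Weil's bound together with the rapid decay of $\widehat W$, and an analogous manipulation of the divisor sums disposes of the continuous-spectrum term. Collecting everything and optimising produces a bound of the shape $T^{1+\epsilon}G+T^{6/5+\epsilon}$, which is $\ll T^{1+\epsilon}G$ precisely when $G>T^{1/5+\epsilon}$. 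The main obstacle — and the source of this restriction — is exactly this last step: pushing past the pointwise Weil bound in the full $(a,c,q)$-sum while keeping all the divisor-type weights coming from $\zeta(2s)$ and from the Hecke relations under control so that no power of $T$ is lost.
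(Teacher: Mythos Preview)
Your outline is essentially the opening moves of Khan--Young's original argument (two approximate functional equations, Kuznetsov, then Poisson), not the reciprocity/Zagier-series approach of the present paper. That in itself would be fine, but the proposal has a genuine gap at the decisive step.

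After Kuznetsov you are facing
\[
\sum_{a,c}\frac{1}{\sqrt{ac}}\sum_{q}\frac{S(a^2,c^2;q)}{q}\,h^{+}\!\Bigl(\frac{4\pi ac}{q}\Bigr),
\]
and you propose to apply Poisson in $a$ only and then close by Weil's bound on ``the surviving complete exponential sums''. This is precisely the level of input that gives the Lam/Tang--Xu threshold $G>T^{1/3+\epsilon}$, not $G>T^{1/5+\epsilon}$. The claimed error term $T^{6/5+\epsilon}$ does not follow: Weil saves $q^{1/2}$ on a single modulus but gives no further cancellation when you sum over $q$ (and over the dual variable), and a careful count of the ranges $a,c\ll T^{1+\epsilon}$, $q\ll ac/(TG)$ shows you are still a power of $T$ short. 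Your description of the phase cancellation (``$e(\pm 2ac/q)$ combines with $e(a^2\bar x/q)$ to cancel $e(c^2\bar x/q)$'') is also not quite right: the oscillation coming out of $h^{+}$ is of size $\asymp ac/q\gg TG$ in the argument, whereas the phases in the opened Kloosterman sum live modulo $q$; the two do not kill each other in the way you describe.

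What is missing is the structural input that both Khan--Young and the present paper exploit: because both entries of $S(a^2,c^2;q)$ are perfect squares, Poisson summation in \emph{both} $a$ and $c$ produces, after completing the square, not generic Weil-size sums but sums built out of Gauss sums/quadratic characters (in the paper's language, Zagier $L$-values $\Zag_{n}(1/2)$, which specialise via \eqref{ldecomp} to $L(1/2,\chi_D)$). The extra saving that pushes the threshold from $T^{1/3}$ down to $T^{1/5}$ comes from estimating the resulting second moment of quadratic Dirichlet $L$-functions by Heath--Brown's large sieve inequality (see \eqref{Last 2mom estimate}, \eqref{LZag 2mom estimate}), not from Weil. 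Without this ingredient --- or an equivalent source of extra averaging over the family of quadratic characters --- your argument cannot reach $G>T^{1/5+\epsilon}$. The same remark applies to the continuous-spectrum term: ``an analogous manipulation of the divisor sums'' is not enough, since the short fourth moment of $\zeta$ on intervals of length $G\asymp T^{1/5}$ is not known directly; one again needs the quadratic-character structure.
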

Previously \eqref{mean Lindelof} was established for $G>T^{1/3+\epsilon}$ by Lam \cite{Lam} (who proved a similar result in the case of holomorphic cusp forms) and by Tang-Xu \cite{TangXu}.  Our goal is to provide another proof of Theorem \ref{KY result}. To compare the two methods, we first discuss briefly the original proof by  Khan and Young of Theorem \ref{KY result} (see \cite[sec.1.3]{KhYoung} for the detailed overview).
As the first step, Khan and Young wrote down an approximate functional equation for each of the two L-functions $L(\sym^2 u_{j},\cdot)$, obtaining an expression for which the Kuznetsov trace formula can be used. Application of the trace formula, in turn, yields sums of Kloosterman sums of the following shape:
\begin{equation}
\sum_{m,n,c}\frac{S(m^2,n^2,c)}{m^{1/2+iU}n^{1/2-iU}c}H\left(\frac{4\pi mn}{c}\right).
\end{equation}
As the next step, Khan and Young evaluated the integral $H(x)$ which was derived from the integral transforms in the Kuznetsov formula.  After that they split the summation over variables $m$ and $n$ into arithmetic progressions modulo $c$ and applied the Poisson summation on both $m$ and $n$, reducing the problem to the analysis of the triple sum:
\begin{equation}
\sum_{k,l,c}\frac{1}{c^2}T(k,l,c)I(k,l,c),
\end{equation}
where $T(k,l,c)$ is some complicated exponential sum related to Gauss sums (see \cite[(5.14)]{KhYoung}), and $I(k,l,c)$ is a double integral.  Note that the sum $T(k,l,c)$ can be evaluated explicitly as was shown in \cite[Lemma 5.3]{KhYoung}, however the answer is quite complicated and it is required to consider separately several different cases. Using the stationary phase method, it is also possible to prove an asymptotic formula for
$I(k,l,c)$. Further, Khan and Young separated the variables $k$, $l$ and $c$ via the Mellin inversion, obtaining a multiple integral and a multiple Dirichlet series.
A delicate analysis of this multiple Dirichlet series allows rearranging it as a series of Dirichlet $L$-functions of quadratic characters. Applying the functional equation to the Dirichlet $L$-function (which can be viewed as the third application of the Poisson summation formula) and continuing to operate with multiple series, Khan and Young finally obtained the second moment of Dirichlet $L$-functions.  This moment was estimated with the help of Heath-Brown's large sieve inequality for quadratic characters completing the proof of Theorem \ref{KY result}.

Now we describe main steps of our proof of Theorem \ref{KY result}. For the sake of simplicity, we consider only the case $U=0$, though the method also works for all $U\ll T^{\epsilon}$. Instead of applying the approximate functional equation twice, we use it only for one of the L-functions $L(\sym^2 u_{j},1/2)$. In doing so, we reduce the problem to the study of the first  moment of symmetric-square $L$-functions twisted by the Fourier coefficient $\lambda_j(m^2)$. For this moment, we apply a reciprocity type formula relating it to the sum $$\sum_{n}\Zag_{n^2-4m^2}(1/2)\frac{I(n/m)}{\sqrt{n}}$$ of the so-called Zagier L-series which can be considered as a certain generalization of the Dirichlet $L$-function of quadratic characters. Here $I(x)$ is some integral of a hypergeometric function. Note that the proof of this reciprocity formula contains application of the Kuznetsov trace formula and one application of the Poisson summation. Finally, it is required to study the double sum:
\begin{equation}
\sum_{n,m}\Zag_{n^2-4m^2}(1/2)\frac{I(n/m)}{\sqrt{nm}}.
\end{equation}
A straightforward attempt to sum over $n$ or over $m$ seems not to lead to a good answer. Therefore, we make the following change of variables:
\begin{equation}
n-2m=q,\quad n+2m=r.
\end{equation}
It turns out that $I(n/m)$ is negligible unless $q\ll T/G^2$ and $qG^2\ll r\ll T$. Making one more change of variable $r=l/q$ and representing the congruence condition $l\equiv0\pmod{q}$ with the use of additive harmonics, we obtain
\begin{equation}
\sum_{q\ll T/G^2}\sum_{c|q}\sum_{\substack{0\le a<c\\(a,c)=1}}\sum_{l\ll qT}\Zag_{l}(1/2)e\left(\frac{al}{q}\right)F(q,l).
\end{equation}
Now we are ready to apply a suitable Voronoi summation formula for the sum over $l$. Note that $\Zag_{l}(1/2)$ is a Fourier coefficient of a combination of Eisenstein-Maass series of half-integral weight and level $4$.  Evaluating the sum over $a$ we get either Gauss sums or half-integral weight Kloosterman sums. In both cases, changing the orders of summations over $c$ and $q$, and then evaluating  the sum over $c$, we obtain Dirichlet $L$-functions of quadratic characters. Since Zagier $L$-series are generalizations of Dirichlet $L$-functions, we finally obtain the second moment of Dirichlet $L$-functions as in the proof of Khan and Young. Estimating it with the help of Heath-Brown large sieve for quadratic characters completes the proof of Theorem \ref{KY result}.

Our method is difficult to compare with the one of Khan and Young due to presence of several changes of variables in our proof. The advantage of our approach is a simplification in the analysis of the arithmetic part of the proof.  More precisely, instead of quite complicated formula for $T(k,l,c)$ (see \cite[Lemma 5.3]{KhYoung}) we obtain some nice generalization of Dirichlet $L$-functions of  quadratic characters, namely Zagier $L$-series $\Zag_{n^2-4m^2}(1/2)$. Furthermore, in the analytic part of the proof we avoid any multiple integrals and extract all desired estimates from the asymptotic behaviour of integrals of hypergeometric functions.

\section{Notation and preliminaries}\label{sec:prelim}
Let $e(x):=\exp(2\pi ix).$ Let $\Gamma(z)$ be the Gamma function. According to Stirling's formula
\begin{multline}\label{Stirling2}
\Gamma(\sigma+it)=\sqrt{2\pi}|t|^{\sigma-1/2}\exp(-\pi|t|/2)
\exp\left(i\left(t\log|t|-t+\frac{\pi t(\sigma-1/2)}{2|t|}\right)\right)\\\times
\left(1+O(|t|^{-1})\right)
\end{multline}
for $|t|\rightarrow\infty$ and a  fixed $\sigma$. We remark that one can write a full asymptotic expansion instead of  $O(|t|^{-1})$.

Let $\{u_j\}$ be an orthonormal basis of the space of Maass cusp forms of level one such that each $u_j$ is an eigenfunctions of all Hecke operators and the hyperbolic Laplacian. We denote by $\{\lambda_{j}(n)\}$ the eigenvalues of Hecke operators acting on $u_{j}$ and by $\kappa_{j}=1/4+t_{j}^2$ (with  $t_j>0$)  the eigenvalues of the hyperbolic Laplacian acting on $u_{j}$.
It is known that
\begin{equation*}
u_{j}(x+iy)=\sqrt{y}\sum_{n\neq 0}\rho_{j}(n)K_{it_j}(2\pi|n|y)e(nx),
\end{equation*}
where $K_{\alpha}(x)$ is the $K$-Bessel function and $\rho_{j}(n)=\rho_{j}(1)\lambda_{j}(n).$
The associated symmetric square $L$-function is defined (for $\Re{s}>1$) by
\begin{equation*}
L(\sym^2 u_{j},s)=\zeta(2s)\sum_{n=1}^{\infty}\frac{\lambda_{j}(n^2)}{n^s}.
\end{equation*}
This function can be  analytically  continued to the whole complex plane. It also satisfies the following functional equation:
\begin{equation}\label{func.eq}
L_{\infty}(s,t_j)L(\sym^2 u_{j},s)=L_{\infty}(1-s,t_j)L(\sym^2 u_{j},1-s),
\end{equation}
where
\begin{equation}\label{L.infinity}
L_{\infty}(s,t_j)=\pi^{-3s/2}\Gamma\left(\frac{s}{2}\right)\Gamma\left(\frac{s+2it_j}{2}\right)\Gamma\left(\frac{s-2it_j}{2}\right).
\end{equation}
As a consequence of \eqref{func.eq} one can obtain the following approximate functional equation.
\begin{lem}
We have
\begin{equation}\label{approx.func.eq.}
L(\sym^2 u_{j},1/2)=2\sum_{m=1}^{\infty}\frac{\lambda_j(m^2)}{m^{1/2}}V(n,t_j),
\end{equation}
where for any $y>0$ and $a>0$
\begin{equation}\label{approx.fun.eq.Vdef}
V(y,t_j)=\frac{1}{2\pi i}\int_{(a)}\frac{L_{\infty}(1/2+z,t_j)}{L_{\infty}(1/2,t_j)}\zeta(1+2z)G(z)y^{-z}\frac{dz}{z}
\end{equation}
and
\begin{equation}\label{Gdef}
G(z)=\exp(z^2)P_n(z^2).
\end{equation}
Here $P_n$ is a polynomial of degree n such that $P_n(0)=1$ and $P_n((1/2+2j)^2)=0$ for $j=0,\ldots, n-1$.
\end{lem}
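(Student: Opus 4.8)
\emph{Sketch of proof.} The plan is to derive \eqref{approx.func.eq.} in the usual way from the functional equation \eqref{func.eq}, using that the completed symmetric square $L$-function of a level one Maass form is self-dual with root number $+1$, so that the central value is captured by a single Dirichlet series with the factor $2$ in front.

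Set $D(s)=\sum_{m\ge 1}\lambda_j(m^2)m^{-s}$, so that $L(\sym^2u_j,s)=\zeta(2s)D(s)$ and hence $\zeta(1+2z)D(1/2+z)=L(\sym^2u_j,1/2+z)$. Fix $a>1/2$ (the integral \eqref{approx.fun.eq.Vdef} is independent of the choice of $a>0$) and put
\[
J:=\frac{1}{2\pi i}\int_{(a)}\frac{L_\infty(1/2+z,t_j)}{L_\infty(1/2,t_j)}\,L(\sym^2u_j,1/2+z)\,G(z)\,\frac{dz}{z}.
\]
On the line $\Re z=a$ the series $D(1/2+z)$ converges absolutely, and since $G(z)=\exp(z^2)P_n(z^2)$ decays faster than any power of $|\Im z|$ along vertical lines while $\zeta(1+2z)$ and the $\Gamma$-ratio grow at most polynomially there, Fubini's theorem lets me interchange $\sum_m$ with the integral. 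This gives $J=\sum_m \lambda_j(m^2)m^{-1/2}V(m,t_j)$, so that \eqref{approx.func.eq.} is equivalent to the identity $2J=L(\sym^2u_j,1/2)$.

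To evaluate $J$ I would move the contour to $\Re z=-\delta$ with $0<\delta<1/2$. The only singularity crossed is the simple pole at $z=0$ coming from $1/z$: the factors $L_\infty(1/2+z,t_j)$, $L(\sym^2u_j,1/2+z)$ and $G(z)$ are holomorphic there, and the poles of $L_\infty(1/2+z,t_j)$, which lie on $\Re z=-1/2$ and further to the left, are not reached. Its residue is $L(\sym^2u_j,1/2)$ because $G(0)=P_n(0)=1$ and the $\Gamma$-ratio equals $1$ at $z=0$. The shift is legitimate since Stirling's formula \eqref{Stirling2} shows that $L_\infty(1/2+z,t_j)/L_\infty(1/2,t_j)$ decays exponentially in $|\Im z|$ on each fixed vertical line, which, combined with the super-polynomial decay of $G(z)$, kills the horizontal connecting segments. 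Thus $J=L(\sym^2u_j,1/2)+R$, where $R$ is the same integral taken over $\Re z=-\delta$. In $R$ I then apply \eqref{func.eq} with $s=1/2+z$, replacing $L_\infty(1/2+z,t_j)L(\sym^2u_j,1/2+z)$ by $L_\infty(1/2-z,t_j)L(\sym^2u_j,1/2-z)$, substitute $z\mapsto-z$, and use that $G$ is even (this is built into \eqref{Gdef}). The result is $R=-\frac{1}{2\pi i}\int_{(\delta)}\frac{L_\infty(1/2+z,t_j)}{L_\infty(1/2,t_j)}L(\sym^2u_j,1/2+z)G(z)\frac{dz}{z}$, and shifting this contour from $\Re z=\delta$ back to $\Re z=a$ (no poles are met) identifies it with $-J$. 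Hence $J=L(\sym^2u_j,1/2)-J$, that is $2J=L(\sym^2u_j,1/2)$, which is \eqref{approx.func.eq.}.

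The only step requiring genuine care is the justification of the contour shifts, i.e. the exponential decay of the $\Gamma$-ratio in $|\Im z|$; this is immediate from \eqref{Stirling2} together with the factor $\exp(z^2)$ in \eqref{Gdef}, which also renders the otherwise arbitrary polynomial $P_n$ harmless here. Its prescribed zeros at $(1/2+2j)^2$ are not needed for the identity itself; they will be used only later, when one estimates $V(y,t_j)$ by pushing the contour in \eqref{approx.fun.eq.Vdef} far to the left past the poles of $L_\infty(1/2+z,t_j)$ on the real axis. Everything else is routine bookkeeping with the Dirichlet series of $L(\sym^2u_j,\cdot)$ and the functional equation.
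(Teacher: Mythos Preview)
Your argument is correct and is precisely the standard derivation that the paper defers to via its citations of \cite{Khan}, \cite{TangXu} and \cite{Ng}; the paper gives no independent proof here, so there is nothing further to compare.
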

\begin{proof}
The proof is similar to \cite[Lemma 2.2]{Khan} (see also \cite[Lemma 1]{TangXu} and \cite[Lemma 7.2.1]{Ng}).
\end{proof}
Arguing in the same way as in \cite[Lemma 2]{TangXu} and applying  \eqref{L.infinity}, \eqref{Stirling2} we prove the  results below.
\begin{lem}
For any positive $y,\,t_j$ and $A$ we have
\begin{equation}\label{Vestimate}
V(y,t_j)\ll\left(\frac{t_j}{y}\right)^{A}.
\end{equation}
For any positive integer $N$ and $1\le y\ll t_j^{1+\epsilon}$ the following asymptotic formula holds
\begin{multline}\label{Vapproximation}
V(y,t_j)=\frac{1}{2\pi i}\int_{(a)}
\left(\frac{t_j}{\pi^{3/2}y}\right)^z
\frac{\Gamma(1/4+z/2)}{\Gamma(1/4)}\zeta(1+2z)G(z)
\left(1+\sum_{k=1}^{N-1}\frac{p_{2k}(v)}{t_j^k}\right)\frac{dz}{z}+O(t_j^{-N+\epsilon}),
\end{multline}
where $v=\Im(z)$ and $p_n(v)$ is a polynomial of degree n.
\end{lem}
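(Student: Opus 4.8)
The plan is to read off both statements from the explicit shape of the integrand in \eqref{approx.fun.eq.Vdef}. By \eqref{L.infinity},
\[
\frac{L_{\infty}(1/2+z,t_j)}{L_{\infty}(1/2,t_j)}
=\pi^{-3z/2}\,\frac{\Gamma(1/4+z/2)}{\Gamma(1/4)}\cdot
\frac{\Gamma(1/4+z/2+it_j)\,\Gamma(1/4+z/2-it_j)}{\Gamma(1/4+it_j)\,\Gamma(1/4-it_j)} .
\]
Apart from the pole at $z=0$ (coming from $\zeta(1+2z)/z$), the integrand of \eqref{approx.fun.eq.Vdef} is holomorphic for $\Re z>-1/2$: its other poles, from the numerator $\Gamma$-factors, lie on the lines $\Re z=-1/2-2k$ with $k\ge0$, and $G$ is entire. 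We may thus move the contour freely within $\Re z>0$.

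For \eqref{Vestimate} I would push the line of integration from $\Re z=a$ to $\Re z=A$ for an arbitrary $A>0$, crossing no pole. On the new line Stirling's formula \eqref{Stirling2} gives $|\Gamma(\sigma+i\tau)|\asymp|\tau|^{\sigma-1/2}e^{-\pi|\tau|/2}$; writing $z=A+iv$ and using $|v/2+t_j|+|v/2-t_j|=\max(|v|,2t_j)$ one gets for the archimedean ratio the bound $\ll (t_j+|v|)^{A-1/2}t_j^{1/2}e^{-\tfrac{\pi}{2}\max(|v|-2t_j,0)}$. Since moreover $\zeta(1+2z)\ll1$, $|G(z)|\ll e^{-v^2/2}$, $|1/z|\le1/A$ and $|y^{-z}|=y^{-A}$, the $v$-integral converges and is $\ll_A t_j^{A}$, which gives $V(y,t_j)\ll_A (t_j/y)^{A}$.

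For \eqref{Vapproximation} I would keep the contour on a fixed line $\Re z=\epsilon$ (legitimate by the above) and use the Gaussian decay of $G$ to truncate it to $|\Im z|\le(\log t_j)^{2}$, the tail being smaller than any fixed power of $t_j$. There $|z|\ll t_j^{o(1)}$, so $z/2$ is small next to $t_j$, and applying the full Stirling expansion (as remarked after \eqref{Stirling2}) to each of $\Gamma(1/4+z/2\pm it_j)$ and $\Gamma(1/4\pm it_j)$ — equivalently, expanding $\log\Gamma(w+z/2)-\log\Gamma(w)=\tfrac z2\psi(w)+\tfrac{(z/2)^2}{2}\psi'(w)+\cdots$ with $w=1/4\pm it_j$, $\psi=\Gamma'/\Gamma$, and $\psi(w)=\log(it_j)+O(1/t_j)$ — one finds, after the factors $e^{-\pi|\cdot|/2}$ cancel in pairs and $\log(it_j)+\log(-it_j)=2\log t_j$,
\[
\frac{\Gamma(1/4+z/2+it_j)\,\Gamma(1/4+z/2-it_j)}{\Gamma(1/4+it_j)\,\Gamma(1/4-it_j)}
= t_j^{\,z}\left(1+\sum_{k=1}^{N-1}\frac{p_{2k}(v)}{t_j^{k}}+O\!\left(\frac{(1+|v|)^{2N}}{t_j^{N}}\right)\right),\qquad v=\Im z,
\]
where $p_{2k}$ has degree $2k$ (the degree doubling comes from exponentiating the logarithmic expansion, whose $t_j^{-j}$-coefficient has degree $\le j+1$ in $z$). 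Substituting this, using $\pi^{-3z/2}t_j^{z}y^{-z}=(t_j/\pi^{3/2}y)^{z}$, keeping the exact factor $\Gamma(1/4+z/2)\zeta(1+2z)G(z)/(\Gamma(1/4)\,z)$, and pushing the contour back to the full line, reproduces precisely the main term of \eqref{Vapproximation}; the $O(\cdot)$-contribution, bounded for $y\ge1$ (where $|y^{-z}|\le1$) by $t_j^{\epsilon}\cdot O(t_j^{-N})\int(1+|v|)^{2N}e^{-v^2/2}\,dv$, is $O(t_j^{-N+\epsilon})$. (The hypothesis $y\ll t_j^{1+\epsilon}$ merely isolates the range in which this asymptotic carries content, by \eqref{Vestimate}.)

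The only real work lies in this last step: tracking how the exponential and oscillatory factors of Stirling's formula for the four $\Gamma$-functions cancel in pairs, and checking that the subleading terms assemble into polynomials $p_{2k}(v)$ of degree $2k$, all uniformly over the truncated contour. The rest — the two contour shifts and the convergent $v$-integrals — is routine, and the computation parallels that in \cite[Lemma 2]{TangXu}.
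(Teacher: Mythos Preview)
Your proposal is correct and follows essentially the same approach as the paper, which simply says to argue as in \cite[Lemma 2]{TangXu} by applying \eqref{L.infinity} and Stirling's formula \eqref{Stirling2}. You have in fact supplied the details the paper omits: the contour shift to $\Re z=A$ for \eqref{Vestimate} and the Stirling expansion of the two $t_j$-dependent Gamma ratios on the truncated contour for \eqref{Vapproximation}.
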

For $\Re{s}>1$ the Zagier $L$-series is defined by
\begin{equation}\label{Lbyk}
\Zag_{n}(s)=\frac{\zeta(2s)}{\zeta(s)}\sum_{q=1}^{\infty}\frac{\rho_q(n)}{q^{s}}=\sum_{q=1}^{\infty}\frac{\lambda_q(n)}{q^{s}},
\end{equation}
where
\begin{equation}
\rho_q(n):=\#\{x\Mod{2q}:x^2\equiv n\Mod{4q}\},\quad
\lambda_q(n):=\sum_{q_{1}^{2}q_2q_3=q}\mu(q_2)\rho_{q_3}(n).
\end{equation}
A  meromorphic continuation and a functional equation for the series  \eqref{Lbyk} were proved by Zagier, see \cite[Proposition 3]{Z}. An important  property of this series is the following
(see \cite[Proposition 3]{Z}):  $$\Zag_n(s)=0 \text{ if } n \equiv 2,3 \Mod{4}.$$
For $n=Dl^2$ with $D$ fundamental discriminant one has
\begin{equation}\label{ldecomp}
\Zag_{n}(s)=l^{1/2-s}T_{l}^{(D)}(s)L(s,\chi_D),
\end{equation}
where $L(s,\chi_D)$ is a Dirichlet $L$-function of primitive quadratic character $\chi_D$ and
\begin{equation}\label{eq:td}
T_{l}^{(D)}(s)=\sum_{l_1l_2=l}\chi_D(l_1)\frac{\mu(l_1)}{\sqrt{l_1}}\tau_{s-1/2}(l_2).
\end{equation}
It follows from \eqref{ldecomp} and the subconvexity estimates on $L(1/2+it,\chi_D)$ due to Conrey and Iwaniec \cite{CI}
and Young \cite{Y} that
\begin{equation}\label{eq:subconvexity}
\Zag_n(1/2+it)\ll (1+|n|)^{\theta}(1+|t|)^{\theta}, \quad \theta=1/6+\epsilon.
\end{equation}
\section{Voronoi's formula for $\Zag_n(1/2)$}\label{sec:Voronoi}
Here we state the Voronoi summation formula for Zagier $L$-series established in \cite{BFVoron}. To this end, it is required to introduce some notation and consider several cases.

For a matrix  $M=\begin{bmatrix}
a&b\\
c&d
\end{bmatrix}\in\Gamma_0(4)$
the theta multiplier is defined as (see \cite[p.2380]{Strom})
\begin{equation}\label{ThetaMultiplier def}
 \TM(M)=\bar{\epsilon}_{d}\left(\frac{c}{d}\right),
\end{equation}
where the symbol $\left(\frac{c}{d}\right)$ is given on \cite[p.442]{Shimura} (in the case when $d$ is a positive odd number it coincides with the classical Jacobi symbol), and $\epsilon_{d}=1$ if $d\equiv1\pmod 4$ and $\epsilon_{d}=i$ if $d\equiv-1\pmod 4$.

Let $\phi(y)$ be a smooth compactly supported function defined for $y>0$ and
\begin{equation}\label{R def}
\Res(x)=\left(\gamma-\log(4\pi)+\frac{1}{2}\right)\phi^{+}(1/2)\frac{\pi^{1/2}x^{-1}}{\Gamma(3/4)},
\end{equation}
where $\phi^{+}(s)$ is a Mellin transform of $\phi(y)$.

Let $\Phi^{(+,+)}(x)$ and $\Phi^{(-,+)}(x)$ be
\begin{equation}\label{Phi++-+def}
\Phi^{(+,+)}(x)=\frac{-x^{1/2}}{2^{1/2}}\left(Y_{0}(2\sqrt{x})+J_{0}(2\sqrt{x})\right),\quad
\Phi^{(-,+)}(x)=\frac{2\sqrt{x}K_{0}(2\sqrt{x})}{\Gamma^2(3/4)}.
\end{equation}
For $y>0$  we introduce the integral transforms:
\begin{equation}\label{phi hat+0 to Phipm def0}
\widehat{\phi}(y)=\int_0^{\infty}\frac{\phi(x)}{x}\Phi^{(+,+)}(xy)dx,\quad
\widehat{\phi}(-y)=\int_0^{\infty}\frac{\phi(x)}{x}\Phi^{(-,+)}(xy)dx.
\end{equation}
Finally, let
\begin{equation*}
a_{n}=\frac{\Gamma\left(\frac{1}{2}-\frac{\sgn{n}}{4}\right)\Zag_{n}\left(\frac{1}{2}\right)}{2\sqrt{\pi|n|}}.
\end{equation*}

\begin{lem}\label{Thm Voronoi an c0mod4}
For $c\equiv0\pmod{4}$ and $ad\equiv1\pmod{4}$ one has
\begin{equation}\label{eq:mainresult c4}
\sum_{n=-\infty}^{\infty}\phi(n)a_{n}e\left( \frac{an}{c}\right)=
\TM(M_0)e(1/8)\Res(c)+
\TM(M_0)e(1/8)\sum_{n\neq0}\widehat{\phi}\left(\frac{4\pi^2n}{c^2}\right)a_{n}e\left( -\frac{dn}{c}\right),
\end{equation}
where $\TM(M_0)=\bar{\epsilon}_{d}\left(\frac{c}{d}\right)$.
\end{lem}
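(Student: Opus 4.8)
The plan is to derive the Voronoi formula from the known $\Gamma_0(4)$-invariance of the half-integral weight Eisenstein series whose Fourier coefficients are essentially the $a_n$, by the classical unfolding/Mellin argument. First I would recall from \cite{Z} that the generating function $\sum_{n} \Zag_n(1/2) q^n$ (with the appropriate normalization by $\Gamma(1/2 - \sgn(n)/4)$ and $\sqrt{\pi|n|}$, i.e. with the coefficients $a_n$) is a Fourier coefficient vector of a weight $3/2$ (holomorphic-part / harmonic) Eisenstein series $E$ on $\Gamma_0(4)$ with the theta multiplier $\TM$; the residual term $\Res(x)$ will come from the constant term / pole of the relevant Eisenstein series at the cusp, which is where the $(\gamma - \log(4\pi) + 1/2)$ and the $\Gamma(3/4)$ in \eqref{R def} originate. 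Then I would write the left-hand side of \eqref{eq:mainresult c4} as a period integral: multiply the $q$-expansion of $E$ by $e(ax/c)$-type twisting data and integrate $\phi$ against $y^{s}$ along a vertical line, producing $\sum_n \phi^+(\text{something}) a_n e(an/c)$ on one side and, after applying the modular transformation under the matrix $M_0 = \begin{bmatrix} a & b \\ c & d\end{bmatrix}$, an expression at the image cusp on the other side.

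The core computation is the local transformation at the cusp $a/c$: under $M_0 \in \Gamma_0(4)$ with $c \equiv 0 \pmod 4$ and $ad \equiv 1 \pmod 4$, the weight $3/2$ automorphy factor contributes exactly $\TM(M_0) = \bar\epsilon_d \left(\frac{c}{d}\right)$ times a power of $(cz+d)$, and the extra constant $e(1/8)$ comes from the explicit value of the Gauss-sum factor attached to the theta multiplier (equivalently, from $\epsilon_d^{-1}$ combined with the sign of the functional equation of $\Zag_n$). Concretely I would: (i) split the sum over $n$ into residue classes mod $c$ and open $e(an/c)$; (ii) recognize the resulting twisted sum as the value of $E$ (or its completion) near $z = a/c$; (iii) apply $E|_{3/2} M_0 = \TM(M_0) E$; (iv) re-expand at $\infty$, which replaces $e(an/c)$ by $e(-dn/c)$ (since $ad \equiv 1$, $a^{-1} \equiv d$) and converts the Mellin kernel $\Gamma$-factors into the Bessel kernels $\Phi^{(+,+)}$ (for $n>0$, via $\Gamma(1/2+z/2)\Gamma(\ldots)$ giving the $J_0, Y_0$ combination) and $\Phi^{(-,+)}$ (for $n<0$, via the $K_0$-kernel with the $\Gamma^2(3/4)$ normalization), matching \eqref{Phi++-+def}–\eqref{phi hat+0 to Phipm def0}; (v) read off \eqref{eq:mainresult c4}.

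The main obstacle is the bookkeeping of the multiplier system and the archimedean $\Gamma$-factors: one must track the precise eighth root of unity $e(1/8)$ and the precise Bessel transforms $\widehat\phi(\pm y)$ through the functional equation, and in particular justify the appearance of two different kernels $\Phi^{(+,+)}$ and $\Phi^{(-,+)}$ depending on $\sgn(n)$ — this reflects that $\Zag_n(s)$ has different gamma-factors for $n > 0$ and $n < 0$ (as in Zagier's functional equation), and it is exactly what forces the case distinction on the sign of $n$ in the definition of $a_n$ and $\widehat\phi$. A secondary technical point is handling the residual term $\Res(c)$: it must be extracted as the contribution of the pole of $\zeta(s)/\zeta(2s)$ (equivalently the pole of the Eisenstein series), and one checks its coefficient against \eqref{R def} by computing the constant term of $E$ at the cusp $a/c$. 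Since all of this — including the explicit derivation over $\Gamma_0(4)$ — is carried out in \cite{BFVoron}, in the paper itself I would simply invoke that reference; the statement of Lemma \ref{Thm Voronoi an c0mod4} is the specialization of the general Voronoi formula there to the case $c \equiv 0 \pmod 4$.
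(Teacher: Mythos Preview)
Your proposal is correct and matches the paper's treatment: the paper does not prove this lemma at all but simply states it as a specialization of the Voronoi formula established in \cite{BFVoron}, exactly as you conclude in your final sentence. Your sketch of the underlying argument (automorphy of the weight $3/2$ Eisenstein series on $\Gamma_0(4)$, tracking $\TM(M_0)$ and the archimedean kernels) is a reasonable outline of how such a formula is derived in that reference, but for the purposes of this paper the citation alone suffices.
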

\begin{lem}\label{Thm Voronoi an codd}
For $(c,2)=1$ and $4ad\equiv -1\pmod{c}$ one has
\begin{equation}\label{Thm.eq Voronoi an codd}
\sum_{n=-\infty}^{\infty}\phi(n)a_{n}e\left( \frac{an}{c}\right)=
\TM(M_1)\sqrt{2}\Res(4c)+
\TM(M_1)\sqrt{2}\sum_{n\neq0}\widehat{\phi}\left(\frac{\pi^2n}{c^2}\right)a_{4n}e\left(\frac{dn}{c}\right),
\end{equation}
where $\TM(M_1)=\bar{\epsilon}_{c}\left(\frac{4d}{c}\right)$.
\end{lem}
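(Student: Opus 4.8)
The plan is to derive Lemma~\ref{Thm Voronoi an codd} from the automorphy, under $\Gamma_0(4)$ with the theta multiplier $\TM$, of the half-integral weight Eisenstein--Maass series whose Fourier coefficients are the numbers $a_n$, in the spirit of the classical derivation of the Voronoi formula for $d(n)$ from the real-analytic $\mathrm{GL}_2$ Eisenstein series. First I would record that, for a suitable finite linear combination $E(z)$ of the weight-$1/2$ Eisenstein--Maass series of level $4$, specialized at the relevant point of the spectral parameter, one has the Fourier expansion
\[
E(x+iy)=c_0(y)+\sum_{n\neq0}a_n\,\W_{\sgn(n)}(4\pi|n|y)\,e(nx),
\]
where $\W_{+}$ is the Whittaker function attached to the given weight and spectral parameter, $\W_{-}$ is the complementary exponentially decaying one, $c_0(y)$ is the (non-trivial) constant term, and $E(Mz)=\TM(M)(cz+d)^{1/2}E(z)$ for every $M=\begin{bmatrix}a&b\\c&d\end{bmatrix}\in\Gamma_0(4)$; the factors $\Gamma(\tfrac12-\tfrac{\sgn(n)}{4})$ occurring in the definition of $a_n$ are precisely those that normalize the $\W_{\pm}$ to the standard shape. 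With this in hand, Lemma~\ref{Thm Voronoi an codd} is exactly this transformation law ``unfolded'' against the test function $\phi$.

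Second, I would turn the transformation law into a functional equation of an additively twisted $L$-series. Writing $\phi(n)=\tfrac{1}{2\pi i}\int_{(\sigma)}\phi^{+}(s)|n|^{-s}\,ds$ and inserting this, the left side of \eqref{Thm.eq Voronoi an codd} becomes $\tfrac{1}{2\pi i}\int_{(\sigma)}\phi^{+}(s)D_{a/c}(s)\,ds$ with $D_{a/c}(s)=\sum_{n\neq0}a_n e(an/c)|n|^{-s}$. The key input is the functional equation of $D_{a/c}(s)$: integrate $E$ along the vertical geodesic through the rational number $a/c$, apply the fractional linear transformation by an element $M_1\in\Gamma_0(4)$ attached to the data $(a,c,d)$ --- this is where the congruence $4ad\equiv-1\Mod{c}$ enters, as the compatibility condition ensuring such an $M_1$ exists with lower-right entry $\equiv d\Mod{c}$ --- and read off the Fourier expansion at the image cusp. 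Since for odd $c$ the point $a/c$ is $\Gamma_0(4)$-equivalent to the cusp $0$, of width $4$, the associated scaling matrix $z\mapsto-1/(4z)$ is the source of the argument $\pi^2n/c^2$ (rather than $4\pi^2n/c^2$), of the replacement of $a_n$ by $a_{4n}$ on the right side, of the normalizing factor $\sqrt2$, and of the collapse of the composite automorphy factor to $\TM(M_1)=\bar{\epsilon}_c\left(\tfrac{4d}{c}\right)$. Shifting the $s$-contour past $s=1/2$, which is a pole of $D_{a/c}(s)$ caused by the non-cuspidal constant term $c_0(y)$ of $E$, produces a residual term proportional to $\phi^{+}(1/2)$; by \eqref{R def} this is exactly $\TM(M_1)\sqrt2\,\Res(4c)$, the constants $\gamma-\log(4\pi)+\tfrac12$ being the usual ones coming from the Laurent expansion of the archimedean and zeta factors attached to $c_0$.

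Third, I would identify the Bessel kernels. After the functional equation the main term takes the form $\sum_{n\neq0}a_{4n}e(dn/c)\cdot\tfrac{1}{2\pi i}\int_{(\sigma')}\phi^{+}(1-s)\,\gamma_{\sgn(n)}(s)\,(\pi^2|n|/c^2)^{-s}\,ds$, where $\gamma_{\sgn(n)}(s)$ is a ratio of Gamma functions determined by the archimedean data of $E$. Evaluating this Mellin--Barnes integral reproduces, for $n>0$, the Bessel combination $Y_0+J_0$ and, for $n<0$, the Bessel function $K_0$, which are precisely the ingredients of $\Phi^{(+,+)}$ and $\Phi^{(-,+)}$ in \eqref{Phi++-+def}; the elementary prefactors $-x^{1/2}/2^{1/2}$ and $2x^{1/2}/\Gamma^2(3/4)$ are forced by the Gamma factors in $a_n$ and $a_{4n}$, and together with the remaining integration against $\phi(x)/x$ they assemble $\widehat{\phi}(\pi^2n/c^2)$ as in \eqref{phi hat+0 to Phipm def0}. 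Thus the main term is $\TM(M_1)\sqrt2\sum_{n\neq0}\widehat{\phi}(\pi^2n/c^2)\,a_{4n}e(dn/c)$, as claimed. The even-modulus case, Lemma~\ref{Thm Voronoi an c0mod4}, is entirely parallel; the only structural difference is that for $c\equiv0\Mod{4}$ the point $a/c$ is $\Gamma_0(4)$-equivalent to $\infty$, whose scaling matrix is trivial, which is why there one meets $a_n$, the argument $4\pi^2n/c^2$, the dual twist $e(-dn/c)$, and the root number $e(1/8)$ in place of $\sqrt2$.

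I expect the main obstacle to be the arithmetic bookkeeping of the theta multiplier together with the level-$4$ cusp structure: one must verify that the automorphy factor of $E$ under $M_1$, the automorphy factor of the width-$4$ scaling matrix at the cusp $0$, and the Gauss sums produced by reorganizing the residue sum $\sum_{n\bmod{c}}$ into the dual variable all combine to leave precisely $\bar{\epsilon}_c\left(\tfrac{4d}{c}\right)$ together with the clean change of twist $e(an/c)\mapsto e(dn/c)$. This requires careful manipulation of the quadratic symbol $\left(\tfrac{\cdot}{\cdot}\right)$ and of the units $\epsilon_d$ via quadratic reciprocity, and it is here that the precise form of the congruence $4ad\equiv-1\Mod{c}$ is indispensable. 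A secondary technical point is maintaining absolute convergence and uniform control of the $s$-contour near $s=1/2$, where both $D_{a/c}(s)$ and its dual have poles; since $\phi$ is smooth and compactly supported, $\phi^{+}$ decays rapidly on vertical lines, which legitimizes all interchanges of summation and integration. By contrast, extracting the residue and evaluating the Mellin--Barnes integral for the Bessel kernels are routine once the functional equation of $D_{a/c}(s)$ is in place.
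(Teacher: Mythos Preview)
The paper does not prove this lemma: Section~\ref{sec:Voronoi} opens with ``Here we state the Voronoi summation formula for Zagier $L$-series established in \cite{BFVoron}'', and Lemmas~\ref{Thm Voronoi an c0mod4}--\ref{Thm Voronoi an c2mod4} are quoted from that reference without argument. So there is no in-paper proof to compare against.

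That said, your outline is the natural route and almost certainly the one taken in \cite{BFVoron}. The paper itself remarks (in the introduction) that ``$\Zag_{l}(1/2)$ is a Fourier coefficient of a combination of Eisenstein--Maass series of half-integral weight and level $4$'', which is exactly the automorphic input you invoke. Your identification of the mechanism is accurate: for odd $c$ the point $a/c$ lies in the $\Gamma_0(4)$-orbit of the cusp $0$ (width $4$), and the scaling matrix there is responsible for the rescaled argument $\pi^2 n/c^2$, the shift $a_n\mapsto a_{4n}$, and the factor $\sqrt{2}$, while the congruence $4ad\equiv -1\Mod{c}$ is precisely what is needed to build the relevant $M_1$ with theta multiplier $\bar{\epsilon}_c\left(\tfrac{4d}{c}\right)$. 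The residue at $s=1/2$ producing $\Res(4c)$ and the Mellin--Barnes evaluation yielding the $J_0,Y_0,K_0$ kernels in \eqref{Phi++-+def} are likewise standard.

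The only place where your sketch is thin is exactly where you flag it: the bookkeeping that collapses the composite automorphy factor (theta multiplier of $M_1$ times the half-integral automorphy factor of the cusp-$0$ scaling matrix) to the single quantity $\TM(M_1)$, together with the precise sign in the dual twist $e(dn/c)$. This is genuine work with quadratic reciprocity and the extended Jacobi symbol, and in a full proof one must also verify that the Fourier coefficients of the chosen combination $E$ at the cusp $0$ are indeed $a_{4n}$ (not merely proportional to them), since the three cusps of $\Gamma_0(4)$ support three distinct Eisenstein series and the relevant linear combination must be pinned down. None of this is an obstruction, but it is where the details live.
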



\begin{lem}\label{Thm Voronoi an c2mod4}
For  $c\equiv2\pmod{4}$ let $c_1=c/2$. For $2ad_5\equiv-1\pmod{c_1}$,  $8ad_4\equiv-1\pmod{c_1}$ one has
\begin{multline}\label{Thm.eq Voronoi an c2mod4}
\sum_{n=-\infty}^{\infty}\phi(n)a_{n}e\left( \frac{an}{c}\right)=
\frac{\sqrt{2}}{\TM(M_4)}\Res(4c_1)-
\TM(M_5)\sqrt{2}\Res(4c_1)\\+
\frac{\sqrt{2}}{\TM(M_4)}\sum_{n\neq0}
\widehat{\phi}\left(\frac{\pi^2n}{4c_1^2}\right)a_{n}e\left(\frac{d_4n}{c_1}\right)-
\TM(M_5)\sqrt{2}\sum_{n\neq0}\widehat{\phi}\left(\frac{\pi^2n}{c_1^2}\right)a_{4n}e\left(\frac{d_5n}{c_1}\right),
\end{multline}
where $\TM^{-1}(M_4)=\epsilon_{c_1}\left(\frac{8a}{c_1}\right)$ and $\TM(M_5)=\bar{\epsilon}_{c_1}\left(\frac{4d_5}{c_1}\right)$.
\end{lem}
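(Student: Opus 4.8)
The plan is to \emph{reduce} this case to the one already treated in Lemma~\ref{Thm Voronoi an c0mod4}. The crucial input is that $a_n=0$ unless $n\equiv0$ or $1\pmod{4}$, since $\Zag_n(1/2)=0$ for $n\equiv2,3\pmod{4}$. Write $c=2c_1$ with $c_1$ odd, and put $A_0=\sum_{n\equiv0\,(4)}\phi(n)a_ne(an/c)$ and $A_1=\sum_{n\equiv1\,(4)}\phi(n)a_ne(an/c)$, so that the left-hand side of \eqref{Thm.eq Voronoi an c2mod4} equals $A_0+A_1$. Detecting the two residue classes by $\mathbf{1}_{n\equiv0\,(4)}=\tfrac14\sum_{h\bmod4}e(hn/4)$ and $\mathbf{1}_{n\equiv1\,(4)}=\tfrac14\sum_{h\bmod4}e(h(n-1)/4)$, and noting that $\tfrac ac+\tfrac h4=\tfrac{2a+hc_1}{4c_1}$ with $\gcd(2a+hc_1,4c_1)=1$ for $h$ odd, one expresses $A_0+A_1$ through the sums $T_h=\sum_n\phi(n)a_ne\!\left(\tfrac{(2a+hc_1)n}{4c_1}\right)$. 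A short computation, using $T_0=A_0+A_1$ and $1+e(-1/2)=0$ (so that the potential modulus-$c_1$ contribution $T_2$ drops out), gives
\[
A_0+A_1=\tfrac12\bigl(1+e(-1/4)\bigr)T_1+\tfrac12\bigl(1+e(-3/4)\bigr)T_3 .
\]

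Since $T_1$ and $T_3$ are sums twisted by additive characters of modulus $4c_1\equiv0\pmod{4}$, Lemma~\ref{Thm Voronoi an c0mod4} applies to each of them, producing multiples of $\Res(4c_1)$ and of dual sums $\sum_{n\neq0}\widehat\phi\!\left(\tfrac{\pi^2n}{4c_1^2}\right)a_ne\!\left(-\tfrac{\tilde d_jn}{4c_1}\right)$, where $\tilde a_j\tilde d_j\equiv1\pmod{4c_1}$ and $\tilde a_j=2a+jc_1$. It then remains to re-organise the dual side: splitting each dual sum once more by the residue of $n$ modulo $4$ (only $n\equiv0,1$ survive) and substituting $n\mapsto4n$ on the part with $4\mid n$ --- using the dilation identity $\widehat{\psi}(y)=\widehat{\phi}(y/4)$ for $\psi(x)=\phi(4x)$ --- one arrives at the two families $\widehat\phi\!\left(\tfrac{\pi^2n}{4c_1^2}\right)a_n$ and $\widehat\phi\!\left(\tfrac{\pi^2n}{c_1^2}\right)a_{4n}$ of \eqref{Thm.eq Voronoi an c2mod4}. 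Performing the partial-fraction splitting $\tfrac{\tilde d_j}{4c_1}\equiv\tfrac{\ast}{4}+\tfrac{\ast}{c_1}\pmod{1}$ and using $\tilde a_j\equiv2a\pmod{c_1}$, the modulus-$c_1$ part of the phase becomes $e(d_4n/c_1)$ with $8ad_4\equiv-1\pmod{c_1}$ on the $a_n$-family and $e(d_5n/c_1)$ with $2ad_5\equiv-1\pmod{c_1}$ on the $a_{4n}$-family, exactly as stated.

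The main obstacle is matching the constant prefactors. One has to collect the roots of unity $\tfrac12\bigl(1+e(\mp1/4)\bigr)e(1/8)$, the theta multipliers $\TM(\tilde M_1),\TM(\tilde M_3)$ of the two auxiliary matrices in $\Gamma_0(4)$ supplied by Lemma~\ref{Thm Voronoi an c0mod4}, and the residual fourth roots of unity coming from the modulus-$4$ parts of the partial fractions, and then check --- by quadratic reciprocity for $\left(\tfrac{\cdot}{c_1}\right)$ together with $\epsilon_{c_1}^2\left(\tfrac{-1}{c_1}\right)=1$ --- that they assemble into precisely $\tfrac{\sqrt2}{\TM(M_4)}$ and $-\sqrt2\,\TM(M_5)$ with $\TM^{-1}(M_4)=\epsilon_{c_1}\!\left(\tfrac{8a}{c_1}\right)$ and $\TM(M_5)=\overline{\epsilon}_{c_1}\!\left(\tfrac{4d_5}{c_1}\right)$. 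A useful check is that these relations force $\TM^{-1}(M_4)=\TM(M_5)$, so the two $\Res(4c_1)$-terms in \eqref{Thm.eq Voronoi an c2mod4} cancel and the residues arising from $T_1$ and $T_3$ must cancel accordingly; in particular for $c\equiv2\pmod{4}$ the formula carries no main term. This multiplier bookkeeping is the only delicate point: the analytic steps --- the two appeals to Lemma~\ref{Thm Voronoi an c0mod4} and the dilation of $\widehat\phi$ --- are routine.

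Alternatively, all three cases can be obtained uniformly from the automorphy of the weight-$1/2$ Eisenstein--Maass series of level $4$ twisted by the theta multiplier, whose $\infty$-Fourier coefficients encode $\Zag_n(1/2)$: for $c\equiv2\pmod{4}$ the cusp $a/c$ is $\Gamma_0(4)$-equivalent to $\tfrac12$, with scaling matrix $\begin{bmatrix}1&0\\2&1\end{bmatrix}$, and the Fourier expansion at $\tfrac12$ of the Eisenstein combination realising $\Zag$ splits naturally into two families, one reproducing $a_n$ and one $a_{4n}$ --- the structural reason for the two Bessel sums, the two moduli, and the two multiplier prefactors in \eqref{Thm.eq Voronoi an c2mod4}. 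In that route the obstacle is the same metaplectic-cocycle bookkeeping, now carried out through a decomposition $M=\delta\begin{bmatrix}1&0\\2&1\end{bmatrix}\begin{bmatrix}1&\ast\\0&1\end{bmatrix}$ with $\delta\in\Gamma_0(4)$, together with the Laurent expansion at the special point that yields the constant $\gamma-\log4\pi+\tfrac12$ of \eqref{R def}.
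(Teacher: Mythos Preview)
The paper does not prove this lemma here; Section~\ref{sec:Voronoi} merely quotes all three Voronoi formulas from \cite{BFVoron}, where they are derived uniformly from the automorphy and cusp expansions of the level-$4$, weight-$\tfrac12$ Eisenstein--Maass combination whose Fourier coefficients are the $a_n$ --- essentially the route you sketch as your ``alternative''.

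Your primary route is correct and genuinely different. The identity
\[
S:=\sum_{n}\phi(n)a_ne\!\left(\frac{an}{c}\right)=\tfrac12\bigl(1+e(-1/4)\bigr)T_1+\tfrac12\bigl(1+e(-3/4)\bigr)T_3
\]
holds exactly as you say (the $T_0=S$ contribution moves to the left and the $T_2$ contribution vanishes because $1+e(-1/2)=0$), and since $T_1,T_3$ carry modulus $4c_1\equiv0\pmod4$, Lemma~\ref{Thm Voronoi an c0mod4} applies to each. This bootstrap is more elementary than revisiting the automorphic argument of \cite{BFVoron}, at the price of the multiplier bookkeeping you flag. Two remarks. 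First, the dilation identity is not actually needed: after Lemma~\ref{Thm Voronoi an c0mod4} the test function is still $\phi$, and the substitution $n\mapsto4m$ in the dual sum gives $\widehat\phi(\pi^2m/c_1^2)$ directly. Second, your check $\TM^{-1}(M_4)=\TM(M_5)$ has a stronger consequence than the cancellation of the $\Res$-terms: combined with $4d_4\equiv d_5\pmod{c_1}$ it forces the $4\mid n$ part of the first Bessel sum in \eqref{Thm.eq Voronoi an c2mod4} to cancel the second Bessel sum entirely, so the right-hand side collapses to $\sqrt2\,\TM(M_5)\sum_{n\equiv1\,(4)}\widehat\phi(\pi^2n/(4c_1^2))a_ne(d_4n/c_1)$. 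This is precisely the shape your reduction produces, since on the dual side of $T_1,T_3$ the mod-$c_1$ phase is $e(d_4n/c_1)$ independently of $j$, and the $4\mid n$ part vanishes by the \emph{same} identity $\TM(\tilde M_1)+i\,\TM(\tilde M_3)=0$ that kills the residues; the remaining verification $\TM(\tilde M_3)=\TM(M_5)$ is then a one-line reciprocity computation (e.g.\ for $c_1\equiv1\pmod4$ both sides equal $\left(\frac{2a}{c_1}\right)$).
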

To avoid misunderstanding in evaluating inverse elements, let $\overline{a}_q$ be an inverse of $a$ modulo $q$  that is  $\overline{a}_qa\equiv 1\pmod{q}$.

After applying the Voronoi formula we will sum the result over $a\pmod{c}$, $(a,c)=1$.

For $c\equiv0\pmod{4}$ and $ad\equiv1\pmod{4}$   let
\begin{equation}\label{STM 0 def}
\STM_0(c,n):=\sum_{\substack{a\pmod{c}\\(a,c)=1}}\TM(M_0)e\left( -\frac{dn}{c}\right)=
\sum_{\substack{a\pmod{c}\\(a,c)=1}}\bar{\epsilon}_{d}\left(\frac{c}{d}\right)e\left( -\frac{dn}{c}\right).
\end{equation}
For $(c,2)=1$ and $4ad\equiv -1\pmod{c}$ let
\begin{multline}\label{STM 1 def}
\STM_1(c,n):=\sum_{\substack{a\pmod{c}\\(a,c)=1}}\TM(M_1)e\left(\frac{dn}{c}\right)=
\bar{\epsilon}_{c}\sum_{\substack{a\pmod{c}\\(a,c)=1}}\left(\frac{4d}{c}\right)e\left(\frac{dn}{c}\right)=\\=
\bar{\epsilon}_{c}\sum_{\substack{a\pmod{c}\\(a,c)=1}}\left(\frac{-\overline{4a}_c}{c}\right)e\left(\frac{-\overline{4a}_cn}{c}\right)=
\bar{\epsilon}_{c}\sum_{\substack{a\pmod{c}\\(a,c)=1}}\left(\frac{a}{c}\right)e\left(\frac{an}{c}\right).
\end{multline}
For  $c\equiv2\pmod{4}$, $c_1=c/2$,  $8ad_4\equiv-1\pmod{c_1}$ and $2ad_5\equiv-1\pmod{c_1}$ let
\begin{equation}\label{STM 4 def}
\STM_4(c,n):=\sum_{\substack{a\pmod{c}\\(a,c)=1}}\TM^{-1}(M_4)e\left(\frac{d_4n}{c_1}\right)=
\epsilon_{c_1}\sum_{\substack{a\pmod{c}\\(a,c)=1}}\left(\frac{8a}{c_1}\right)
e\left(\frac{-\overline{8a}_{c_1}n}{c_1}\right),
\end{equation}
\begin{multline}\label{STM 5 def}
\STM_5(c,n):=\sum_{\substack{a\pmod{c}\\(a,c)=1}}\TM(M_5)e\left(\frac{d_5n}{c_1}\right)=
\bar{\epsilon}_{c_1}\sum_{\substack{a\pmod{c}\\(a,c)=1}}
\left(\frac{4d_5}{c_1}\right)e\left(\frac{d_5n}{c_1}\right)=\\=
\bar{\epsilon}_{c_1}\sum_{\substack{a\pmod{c}\\(a,c)=1}}
\left(\frac{-2\overline{a}_{c_1}}{c_1}\right)e\left(\frac{-\overline{2a}_{c_1}n}{c_1}\right).
\end{multline}
\begin{lem}\label{Lemma STM45}
For  $c\equiv2\pmod{4}$ one has
\begin{equation}\label{STM 45 to STM1}
\STM_4(c,n)=\STM_5(c,n)=\STM_1(c/2,n).
\end{equation}
\end{lem}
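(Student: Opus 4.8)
\section*{Proof proposal}

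The plan is to express all three sums in terms of the single quadratic exponential sum
\[
g^{\pm}(c_1,n):=\sum_{\substack{a\pmod{c_1}\\(a,c_1)=1}}\left(\frac{a}{c_1}\right)e\left(\frac{\pm an}{c_1}\right),
\]
where $c_1=c/2$ is odd, and then to read off the claimed equalities. Replacing $a$ by $-a$ and using multiplicativity of the Jacobi symbol gives $g^{-}(c_1,n)=\left(\frac{-1}{c_1}\right)g^{+}(c_1,n)$, and since $\epsilon_{c_1}^{2}=\left(\frac{-1}{c_1}\right)$ for odd $c_1$ we get the multiplier identity $\epsilon_{c_1}\left(\frac{-1}{c_1}\right)=\bar{\epsilon}_{c_1}$. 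Finally, specialising \eqref{STM 1 def} to the odd modulus $c_1$ gives $\STM_1(c_1,n)=\bar{\epsilon}_{c_1}\,g^{+}(c_1,n)$. These three elementary facts are the only inputs needed.

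First I would note that since $c=2c_1$ with $(2,c_1)=1$, the reduction $a\pmod c\mapsto a\pmod{c_1}$ is a bijection between residues coprime to $c$ and residues coprime to $c_1$ (the component modulo $2$ being forced to be odd). Every summand occurring in $\STM_4(c,n)$ and $\STM_5(c,n)$ depends on $a$ only through $a\pmod{c_1}$ --- it involves only $\left(\frac{\cdot}{c_1}\right)$, inverses modulo $c_1$, and exponentials of denominator $c_1$ --- so each outer sum $\sum_{a\pmod c,\,(a,c)=1}$ may be replaced verbatim by $\sum_{a\pmod{c_1},\,(a,c_1)=1}$ with no change of value.

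Next, for $\STM_4$ I would substitute $b\equiv\overline{8a}_{c_1}$, which permutes the units modulo $c_1$; then $8a\equiv\overline{b}_{c_1}\pmod{c_1}$, hence $\left(\frac{8a}{c_1}\right)=\left(\frac{\overline{b}_{c_1}}{c_1}\right)=\left(\frac{b}{c_1}\right)$, while the exponential becomes $e(-bn/c_1)$. This yields $\STM_4(c,n)=\epsilon_{c_1}\,g^{-}(c_1,n)=\epsilon_{c_1}\left(\frac{-1}{c_1}\right)g^{+}(c_1,n)=\bar{\epsilon}_{c_1}\,g^{+}(c_1,n)=\STM_1(c_1,n)$. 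For $\STM_5$ I would instead substitute $b\equiv\overline{2a}_{c_1}$, again a permutation of the units, so that $\overline{a}_{c_1}\equiv 2b\pmod{c_1}$ and the exponential becomes $e(-bn/c_1)$; the Jacobi symbol transforms as $\left(\frac{-2\overline{a}_{c_1}}{c_1}\right)=\left(\frac{-4b}{c_1}\right)=\left(\frac{-1}{c_1}\right)\left(\frac{b}{c_1}\right)$, using $\left(\frac{4}{c_1}\right)=1$. Therefore $\STM_5(c,n)=\bar{\epsilon}_{c_1}\left(\frac{-1}{c_1}\right)g^{-}(c_1,n)=\bar{\epsilon}_{c_1}\left(\frac{-1}{c_1}\right)^{2}g^{+}(c_1,n)=\bar{\epsilon}_{c_1}\,g^{+}(c_1,n)=\STM_1(c_1,n)$, which completes the argument.

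There is no genuine obstacle here: the lemma is pure bookkeeping via the Chinese remainder theorem and reindexing of character sums. The only points requiring care are the multiplier identity $\epsilon_{c_1}\left(\frac{-1}{c_1}\right)=\bar{\epsilon}_{c_1}$ (equivalently $\epsilon_{c_1}^{2}=\left(\frac{-1}{c_1}\right)$ for odd $c_1$), the fact that $\left(\frac{\overline{b}_{c_1}}{c_1}\right)=\left(\frac{b}{c_1}\right)$ for a unit $b$, and the collapse $\left(\frac{4}{c_1}\right)=1$; with these in hand every step is a direct substitution.
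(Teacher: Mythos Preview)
Your proof is correct and follows essentially the same route as the paper: reduce the summation from residues modulo $c$ to residues modulo $c_1$ via the bijection of units (the paper argues this by a counting argument rather than CRT, but the content is identical), then reindex each character sum and use $\epsilon_{c_1}\left(\frac{-1}{c_1}\right)=\bar{\epsilon}_{c_1}$ to match $\STM_1(c_1,n)$. The only cosmetic difference is that you introduce the auxiliary notation $g^{\pm}(c_1,n)$ and perform the substitutions $b=\overline{8a}_{c_1}$, $b=\overline{2a}_{c_1}$ in one step, whereas the paper composes two simpler substitutions; the underlying manipulations are the same.
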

\begin{proof}
The set $S=\{a\pmod{c},\, (a,c)=1\}$ being considered modulo $c_1$ coincides with the set $\{a\pmod{c_1},\, (a,c_1)=1\}$.  This is because both of the sets consist of the same number of elements $\phi(c)=\phi(2c_1)=\phi(c_1)$, and all  $a\pmod{c}$, $(a,c)=1$ have different remainders modulo $c_1$. Indeed, suppose that $a_1\equiv a_2\pmod{c_1}$. Then since $a_1$, $a_2$ are odd one also has $a_1\equiv a_2\pmod{2}$ and thus  $a_1\equiv a_2\pmod{c}$ a contradiction.

Therefore, \eqref{STM 4 def} can be rewritten as
\begin{multline}\label{STM 4 def2}
\STM_4(c,n)=
\epsilon_{c_1}\sum_{\substack{a\pmod{c_1}\\(a,c_1)=1}}\left(\frac{8a}{c_1}\right)e\left(\frac{-\overline{8a}_{c_1}n}{c_1}\right)=
\epsilon_{c_1}\sum_{\substack{a\pmod{c_1}\\(a,c_1)=1}}\left(\frac{a}{c_1}\right)e\left(\frac{-\overline{a}_{c_1}n}{c_1}\right)=\\=
\epsilon_{c_1}\left(\frac{-1}{c_1}\right)\sum_{\substack{a\pmod{c_1}\\(a,c_1)=1}}\left(\frac{a}{c_1}\right)e\left(\frac{an}{c_1}\right)=
\bar{\epsilon}_{c_1}\sum_{\substack{a\pmod{c_1}\\(a,c_1)=1}}\left(\frac{a}{c_1}\right)e\left(\frac{an}{c_1}\right).
\end{multline}
In the same way \eqref{STM 5 def} transforms into the form:
\begin{equation}\label{STM 5 def2}
\STM_5(c,n)=
\bar{\epsilon}_{c_1}\sum_{\substack{a\pmod{c}\\(a,c)=1}}\left(\frac{-2\overline{a}_{c_1}}{c_1}\right)e\left(\frac{-\overline{2a}_{c_1}n}{c_1}\right)=
\bar{\epsilon}_{c_1}\sum_{\substack{a\pmod{c_1}\\(a,c_1)=1}}\left(\frac{a}{c_1}\right)e\left(\frac{an}{c_1}\right).
\end{equation}
\end{proof}

Next, let us investigate the series:
\begin{equation}\label{STM series}
\sum_{c\equiv0\pmod{4}}\frac{\STM_0(c,n)}{c^{2s}},\quad
\sum_{(c,2)=1}\frac{\STM_1(c,n)}{c^{2s}}.
\end{equation}
To this end, we first define several objects studied in \cite{PRR}.  Let $n=n_0n_1^2$ with $n_0$ being square-free. Define (see \cite[(2.7)]{PRR})
\begin{equation}\label{PRR q def}
q(w,n):=\prod_{\substack{p|n_1\\p\neq2}}\sum_{\beta=0}^{\nu_p(n_1)}\frac{1-\delta_{\beta<\nu_p(n_1)}\chi_{n_0}(p)p^{-w}}{p^{2\beta(w-1/2)}}.
\end{equation}
Here $\chi_{n_0}(p)=\left(\frac{n_0}{p}\right)$ is a Legendre symbol, $\nu_p(n_1)$ is equal to the number $k$ such that $n_1=p^kn_2$, $(n_2,p)=1$, and for some statement $S$  the symbol $\delta_S=1$ if $S$ is true and $\delta_S=0$ otherwise. The function $q(w,n)$ satisfies the following properties (see \cite[Proposition 2.3]{PRR}).
If $n$ is square-free, then $q(w,n)=1$. For $\Re{s}\ge 1/2$ one has $q(w,n)\ll n^{\epsilon}$. Also $q(w,n)=n_1^{1-2w}q(1-w,n).$

Let (see \cite[(2.8)]{PRR})
\begin{equation}\label{PRR Last def}
L^{\ast}(w,n):=q(w,n)L_2(w,\chi_{n_0}),
\end{equation}
where $L_2(w,\chi_{n_0})$ is the standard Dirichlet $L$-function with the 2-factor removed.
Using \cite[(2.3), p.1546 line 5]{PRR}, we obtain $\STM_1(c,n)=H_n(c).$ Thus \cite[Lemma 2.2]{PRR} yields
\begin{equation}\label{STM1 series}
\sum_{(c,2)=1}\frac{\STM_1(c,n)}{c^{2s}}=\frac{L^{\ast}(2s-1/2,n)}{\zeta_2(4s-1)}.
\end{equation}
Note that $q(w,n)$ is closely related to $l^{1/2-w}T_{n_1}^{(n_0)}(w)$ (see \eqref{eq:td}), since
\begin{equation}\label{PRR q def2}
q(w,n)=\prod_{\substack{p|n_1\\p\neq2}}\left(
\frac{1-p^{(1-2w)(1+\nu_p(n_1))}}{1-p^{1-2w}}-\frac{\chi_{n_0}(p)}{p^w}\frac{1-p^{(1-2w)\nu_p(n_1)}}{1-p^{1-2w}}
\right)
\end{equation}
and
\begin{equation}\label{eq:td2}
l^{1/2-w}T_{n_1}^{(n_0)}(w)=\prod_{p|n_1}\left(
\frac{1-p^{(1-2w)(1+\nu_p(n_1))}}{1-p^{1-2w}}-\frac{\chi_{n_0}(p)}{p^w}\frac{1-p^{(1-2w)\nu_p(n_1)}}{1-p^{1-2w}}
\right).
\end{equation}
Therefore, $L^{\ast}(w,n)$ and $\Zag_n(w)$ differ slightly from each other.
It follows from \cite[(3.1),(3.3)]{PRR} that
\begin{equation}\label{STM0 series}
\sum_{c\equiv0\pmod{4}}\frac{\overline{\STM_0(c,n)}}{c^{2s}}=\frac{L^{\ast}(2s-1/2,n)}{\zeta_2(4s-1)}r_2(s,n),
\end{equation}
where (see \cite[(3.6),(3.7)]{PRR})
\begin{equation}\label{r2 def23}
r_2(s,n)=\frac{-(1+i)}{2^{2s}},\quad\,n\equiv2,3\pmod{4},
\end{equation}
\begin{equation}\label{r2 def1}
r_2(s,n)=\frac{(1+i)}{2^{2s}}+\frac{(1+i)(-1)^{(n-1)/4}}{2^{6s-1/2}},\quad\,n\equiv1\pmod{4},
\end{equation}
and for $n=4^{r}n_4$ with $n_4\neq0\pmod{4}$
\begin{equation}\label{r2 def0}
r_2(s,n)=\frac{(1+i)}{4}u_r(2^{1-2s})+4^{r(1-2s)}r_2(s,n_4),\quad  u_r(x)=\frac{x^{2(r+1)}-x^2}{x^2-1}.
\end{equation}
One can rewrite \eqref{STM0 series} in the form
\begin{equation}\label{STM0 series2}
\sum_{c\equiv0\pmod{4}}\frac{\STM_0(c,n)}{c^{2s}}=\frac{L^{\ast}(2s-1/2,n)}{\zeta_2(4s-1)}\overline{r_2(\bar{s},n)}.
\end{equation}
Using Heath-Brown's large sieve inequality \cite{HB} (see \cite[(3.1)]{KhYoung}), one has \cite[(3.2)]{KhYoung}
\begin{equation}\label{Last 2mom estimate}
\sum_{n\le N}|L^{\ast}(1/2+it,n)|^2\ll\left(N+\sqrt{N(1+|t|)}\right)\left(N(1+|t|)\right)^{\epsilon},
\end{equation}
\begin{equation}\label{LZag 2mom estimate}
\sum_{n\le N}|\Zag_n(1/2+it)|^2\ll\left(N+\sqrt{N(1+|t|)}\right)\left(N(1+|t|)\right)^{\epsilon}.
\end{equation}

In order to find an asymptotic expansion for $\widehat{\phi}(y)$ we will use the following version of the saddle point method due to Huxley \cite{Hux}.

\begin{lem}\label{Lemma Huxley}
Let
\begin{equation}\label{I Hux def}
I=\int_a^{b}g(x)e(f(x))dx,
\end{equation}
where
$f(x)$ and $g(x)$ are some smooth functions defined on the interval $[a,b]$ such that $g(a)=g(b)=0$ and
\begin{equation}\label{Hux fg conditions}
f^{(i)}(x)\ll\frac{\Theta_f}{\Omega_f^i},\quad
g^{(j)}(x)\ll\frac{1}{\Omega_g^j},\quad
f^{(2)}(x)\gg\frac{\Theta_f}{\Omega_f^2}
\end{equation}
for $i=1,2,3,4$ and $j=0,1,2$. Suppose that $f'(x_0)=0$ for $a<x_0<b$, and that $f(x)$ changes its sign from negative to positive at $x=x_0$. Let $\kappa=\min(x_0-a,b-x_0)$. Then
\begin{equation}\label{I Hux asympt}
I=\frac{g(x_0)e(f(x_0)+1/8)}{\sqrt{f''(x_0)}}+O\left(\frac{\Omega_f^4}{\kappa^3\Theta_f^2}+
\frac{\Omega_f}{\Theta_f^{3/2}}+\frac{\Omega_f^3}{\Theta_f^{3/2}\Omega_g^2}\right).
\end{equation}
\end{lem}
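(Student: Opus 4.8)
The plan is to reduce $I$ to a Fresnel integral by a change of variables that makes the phase exactly quadratic, and then to control the resulting error by integration by parts together with the van der Corput second‑derivative estimate. First I would note that the lower bound $f''\gg\Theta_f/\Omega_f^2$ combined with the upper bound $f''\ll\Theta_f/\Omega_f^2$ from \eqref{Hux fg conditions} gives $f''(x)\asymp\Theta_f/\Omega_f^2$ on $[a,b]$; since $f'(x_0)=0$ and $f'$ is then strictly increasing, $f'<0$ on $(a,x_0)$ and $f'>0$ on $(x_0,b)$, so $f$ has a non‑degenerate minimum at $x_0$ and $f(x)\ge f(x_0)$ throughout. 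The bound $f'\ll\Theta_f/\Omega_f$ together with $f''\asymp\Theta_f/\Omega_f^2$ also forces $b-a\ll\Omega_f$. Writing $f(x)-f(x_0)=(x-x_0)^2R(x)$ with $R$ smooth and $R(x)\asymp\Theta_f/\Omega_f^2$, the substitution
\[
u=\sgn(x-x_0)\sqrt{f(x)-f(x_0)}=(x-x_0)\sqrt{R(x)}
\]
is a smooth increasing bijection of $[a,b]$ onto $[u_1,u_2]$ with $u_1<0<u_2$, $|u_i|\asymp\Theta_f^{1/2}\Omega_f^{-1}\cdot(\text{distance from }x_0\text{ to the endpoint})$; hence $\min(|u_1|,|u_2|)\gg\Theta_f^{1/2}\kappa\Omega_f^{-1}$ and $u_2-u_1\ll\Theta_f^{1/2}$. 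With $x=\psi(u)$ the inverse map one gets
\[
I=e(f(x_0))\int_{u_1}^{u_2}G(u)\,e(u^2)\,du,\qquad G(u)=g(\psi(u))\psi'(u),
\]
where $G(u_1)=G(u_2)=0$ because $g(a)=g(b)=0$, and $G(0)=g(x_0)\psi'(0)=g(x_0)\sqrt{2/f''(x_0)}$.

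The next step is to record quantitative bounds on the change of variables: differentiating the relation $(\psi(u)-x_0)^2R(\psi(u))=u^2$ repeatedly and using \eqref{Hux fg conditions} one obtains $\psi'(u)\asymp\Omega_f\Theta_f^{-1/2}$ and $\psi^{(k)}(u)\ll\Omega_f\Theta_f^{-1/2}\,\Theta_f^{-(k-1)/2}$ for $k\le3$; combined with $g^{(j)}\ll\Omega_g^{-j}$ for $j\le2$ this yields $G^{(k)}(u)\ll\Omega_f\Theta_f^{-1/2}(\Omega_G+|u|)^{-k}$ for $k\le2$, where $\Omega_G:=\min(\Theta_f^{1/2},\,\Omega_g\Theta_f^{1/2}\Omega_f^{-1})$. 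Writing $G(u)=G(0)+uG_1(u)$ with $G_1$ smooth and $G_1^{(k)}(u)\ll\Omega_f\Theta_f^{-1/2}(\Omega_G+|u|)^{-(k+1)}$, I would split
\[
\int_{u_1}^{u_2}G(u)e(u^2)\,du=G(0)\int_{u_1}^{u_2}e(u^2)\,du+\int_{u_1}^{u_2}uG_1(u)e(u^2)\,du .
\]
In the first term, complete to $\int_{-\infty}^{\infty}e(u^2)\,du=e(1/8)/\sqrt2$ and integrate by parts in the two tails, writing each tail as a boundary term of size $G(0)/|u_i|$ plus $O(G(0)|u_i|^{-3})$. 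In the second term, integrate by parts once via $u\,e(u^2)=(4\pi i)^{-1}(e(u^2))'$; since $u_iG_1(u_i)=G(u_i)-G(0)=-G(0)$, the boundary terms produced are exactly the negatives of those from the first term and cancel, leaving
\[
I=\frac{g(x_0)\,e(f(x_0)+1/8)}{\sqrt{f''(x_0)}}-\frac{e(f(x_0))}{4\pi i}\int_{u_1}^{u_2}G_1'(u)e(u^2)\,du+O\!\left(\frac{G(0)}{\min(|u_1|,|u_2|)^3}\right),
\]
and the last error is $\ll\Omega_f^4\kappa^{-3}\Theta_f^{-2}$ because $G(0)\ll\Omega_f\Theta_f^{-1/2}$; this is the first error term in \eqref{I Hux asympt}.

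Finally, the remaining integral is estimated by the van der Corput second‑derivative test (the phase $u^2$ has second derivative $\asymp1$), giving $\int_{u_1}^{u_2}G_1'(u)e(u^2)\,du\ll\sup|G_1'|+\int_{u_1}^{u_2}|G_1''(u)|\,du$; inserting the pointwise bounds $G_1'(u)\ll\Omega_f\Theta_f^{-1/2}(\Omega_G+|u|)^{-2}$ and $G_1''(u)\ll\Omega_f\Theta_f^{-1/2}(\Omega_G+|u|)^{-3}$ bounds this by $\ll\Omega_f\Theta_f^{-1/2}\Omega_G^{-2}$, which equals $\Omega_f\Theta_f^{-3/2}$ when $\Omega_G=\Theta_f^{1/2}$ and $\Omega_f^3\Theta_f^{-3/2}\Omega_g^{-2}$ when $\Omega_G=\Omega_g\Theta_f^{1/2}\Omega_f^{-1}$, i.e. altogether $\ll\Omega_f\Theta_f^{-3/2}+\Omega_f^3\Theta_f^{-3/2}\Omega_g^{-2}$; together with the $\kappa$‑term this is precisely the error in \eqref{I Hux asympt}. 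The main obstacle is the second paragraph: proving that $\psi$ is smooth across $u=0$ and obtaining the quantitative control of $\psi^{(k)}$, hence of $G^{(k)}$ and $G_1^{(k)}$, in terms of $\Theta_f$ and $\Omega_f$ — a quantitative Morse lemma — and in particular the refined bounds of $G_1'$, $G_1''$ away from the origin, which are what is needed to reach the stated error without a spurious logarithmic loss. Everything else is routine completion of the Fresnel integral and integration by parts.
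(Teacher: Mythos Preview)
The paper does not prove this lemma at all; it is quoted verbatim from Huxley \cite{Hux} and used as a black box, so there is no in--paper argument to compare against. Your plan---a quantitative Morse substitution $u=\sgn(x-x_0)\sqrt{f(x)-f(x_0)}$ reducing the problem to a Fresnel integral, completion of that integral, and integration by parts on the remainder---is the classical route and is essentially the method of Huxley's original paper, so at the level of strategy the proposal is correct and matches the cited source.

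There is, however, a concrete error in the bounds you record in the second and third paragraphs. The claimed decay $G^{(k)}(u)\ll\Omega_f\Theta_f^{-1/2}(\Omega_G+|u|)^{-k}$, and hence $G_1^{(k)}(u)\ll\Omega_f\Theta_f^{-1/2}(\Omega_G+|u|)^{-(k+1)}$, is false: the weight $g$ may oscillate on scale $\Omega_g$ anywhere on $[a,b]$, so the term $g''(\psi(u))(\psi'(u))^{3}$ in $G''$ is of size $\Omega_f\Theta_f^{-1/2}\Omega_G^{-2}$ uniformly in $u$ with no decay (take $f(x)=x^{2}$ and $g(x)=\sin(x/\Omega_g)\chi(x)$ to see this explicitly). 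The correct bounds are $G^{(k)}(u)\ll\Omega_f\Theta_f^{-1/2}\Omega_G^{-k}$ uniformly, and for $G_1=(G-G(0))/u$ one gets only $G_1^{(k)}(u)\ll\Omega_f\Theta_f^{-1/2}\Omega_G^{-k}(\Omega_G+|u|)^{-1}$. With these, the van der Corput step $\sup|G_1'|+\int_{u_1}^{u_2}|G_1''|$ acquires a factor $\log(\Theta_f^{1/2}/\Omega_G)=\log(\Omega_f/\Omega_g)$ from the integral, and the argument as written does not reach the stated error. You rightly flag this spot as ``the main obstacle'', but the refined pointwise bounds you propose as its resolution are not available; removing the logarithm requires a different treatment of $\int G_1'(u)e(u^{2})\,du$ (for instance a further subdivision in $u$ combined with the first--derivative test on the pieces away from $u=0$, as in Huxley's argument), not the decay estimates you state.
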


To bound various integrals we will usually apply estimates based on  multiple integration by parts, namely  \cite[Lemma 8.1]{BKY} (see also \cite[Lemma A1]{AHLQ}).
\begin{lem}\label{Lemma BKY}
Suppose that there are parameters $R,P,X,Y,V$ such that
\begin{equation}\label{BKYconditions}
|f'(x)|\gg R,\quad
f^{(i)}(x)\ll\frac{Y}{P^i},\quad
g^{(j)}(x)\ll\frac{X}{V^j}
\end{equation}
for $i\ge1,j\ge0$. Then
\begin{equation}\label{I BKY est}
I=\int_a^{b}g(x)e(f(x))dx\ll(b-a)X\left(\frac{1}{RV}+\frac{1}{RP}+\frac{Y}{R^2P^2}\right)^{A}.
\end{equation}
\end{lem}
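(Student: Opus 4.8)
The plan is to prove the bound by integrating by parts $A$ times, gaining a factor at each step from the lower bound $|f'|\gg R$. Since $f'$ does not vanish on $[a,b]$, the first-order operator
\[
(\mathcal{L}h)(x):=-\frac{1}{2\pi i}\,\frac{d}{dx}\!\left(\frac{h(x)}{f'(x)}\right)
\]
is well defined, and the identity $e(f)=\frac{1}{2\pi i f'}\frac{d}{dx}e(f)$ together with one integration by parts gives $\int_a^b g\,e(f)\,dx=\int_a^b(\mathcal{L}g)\,e(f)\,dx$, the boundary term $\frac{1}{2\pi i}\big[(g/f')e(f)\big]_a^b$ being discarded — legitimate since in the applications of this lemma $g$ and all its derivatives vanish at $a$ and $b$. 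Iterating $A$ times,
\[
I=\int_a^{b}(\mathcal{L}^A g)(x)\,e(f(x))\,dx,\qquad\text{whence}\qquad |I|\le (b-a)\sup_{[a,b]}\big|\mathcal{L}^A g\big|,
\]
so it suffices to bound $\mathcal{L}^A g$ pointwise.

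The key structural fact, proved by induction on $k$, is that $\mathcal{L}^k g$ is a linear combination of $O_k(1)$ terms of the shape
\[
\frac{g^{(a_0)}\,f^{(a_1+1)}\cdots f^{(a_r+1)}}{(f')^{\,k+r}},\qquad a_0\ge 0,\quad a_1,\dots,a_r\ge 1,\quad a_0+a_1+\cdots+a_r=k.
\]
Indeed, applying $\mathcal{L}$ to such a term either differentiates one numerator factor (raising that index and the power of $f'$ in the denominator each by $1$) or differentiates $(f')^{-(k+r)}$, which produces one new factor $f''$, i.e.\ increases $r$ by $1$ with a new index equal to $1$; in all cases the level $k$ increases by $1$, the identity $a_0+\sum_i a_i=k$ is preserved, and the number of terms is multiplied by a bounded factor. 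For $k=0$ the claim is trivial.

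Inserting the hypotheses $|f'|\gg R$, $f^{(i)}\ll Y/P^i$, $g^{(j)}\ll X/V^j$ and writing $s:=a_1+\cdots+a_r$ (so $a_0=k-s$ and $r\le s\le k$), a term as above is
\[
\ll\frac{(X/V^{a_0})\prod_{i=1}^{r}(Y/P^{a_i+1})}{R^{\,k+r}}
=X\left(\frac{Y}{R^2P^2}\right)^{r}\left(\frac{1}{RP}\right)^{s-r}\left(\frac{1}{RV}\right)^{k-s}.
\]
The three exponents $r$, $s-r$, $k-s$ are non-negative and sum to $k$, so each term is $\le X\big(\tfrac1{RV}+\tfrac1{RP}+\tfrac{Y}{R^2P^2}\big)^{k}$. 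Summing the $O_k(1)$ terms and taking $k=A$ gives $\sup_{[a,b]}|\mathcal{L}^A g|\ll_A X\big(\tfrac1{RV}+\tfrac1{RP}+\tfrac{Y}{R^2P^2}\big)^{A}$, which combined with the displayed bound for $|I|$ proves the lemma.

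The only delicate point is the bookkeeping in the last step: one must verify that after collecting powers of $R$, $P$, $V$ every term really is a product of exactly $A$ of the three admissible quantities $\tfrac1{RV},\tfrac1{RP},\tfrac{Y}{R^2P^2}$ — this is where the conservation law $a_0+\sum_i a_i=k$ and the inequalities $r\le s\le k$ enter — and that the number of terms generated by $A$ applications of $\mathcal{L}$ stays bounded in terms of $A$. Both are routine once the indices are tracked carefully; the statement is essentially \cite[Lemma 8.1]{BKY}.
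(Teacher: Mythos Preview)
Your argument is correct and is exactly the repeated-integration-by-parts proof behind \cite[Lemma~8.1]{BKY}. Note that the paper does not supply its own proof of this lemma at all: it simply quotes the statement from \cite{BKY} (with a parallel reference to \cite[Lemma~A1]{AHLQ}), so there is nothing to compare your approach against beyond observing that you have reproduced the standard argument. One small point worth making explicit: the vanishing of the boundary terms, which you justify by appeal to the applications, is in fact built into the hypotheses of the original \cite[Lemma~8.1]{BKY}, where the weight function is assumed smooth and compactly supported; the paper's abbreviated restatement omits this, but every invocation in the paper has $g$ containing a compactly supported factor $U(\cdot)$, so your remark is accurate.
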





\section{The second moment and the sum of Zagier $L$-series}
For an arbitrary large integer $N$, we define
\begin{equation}\label{qN def}
q_N(r):=\frac{(r^2+1/4)\ldots(r^2+(N-1/2)^2)}{(r^2+100N^2)^N},
\end{equation}
\begin{equation}\label{hN def}
h(T,G,N;r):=q_N(r)\exp\left(-\frac{(r-T)^2}{G^2}\right)+q_N(r)\exp\left(-\frac{(r+T)^2}{G^2}\right).
\end{equation}
Note that $q_N(r)=1+O(1/r^2)$. For the sake of simplicity, we will write $h(r)$ instead of $h(T,G,N;r)$.

In order to prove Theorem \ref{KY result} we consider the second moment:
\begin{equation}
\M_2:=\sum_{j}h(t_j)\alpha_{j}L^2(\sym^2 u_{j},1/2).
\end{equation}
Using the results of \cite[sec.5]{BF2mom} with $l=1$ we infer
\begin{equation}\label{M2 est1}
\M_2\ll T^{1+\epsilon}G+\sum_{m\ll T^{1+\epsilon}}\frac{1}{m^{1/2}}
\sum_{n=2m+1}^{\infty}\frac{1}{n^{1/2}}\Zag_{n^2-4m^2}(1/2)I\left(m,\frac{n}{m}\right),
\end{equation}
where
\begin{multline}\label{I(m,x)def}
I(m,x)=\int_{-\infty}^{\infty}\frac{rh(r)V(m,r)}{\cosh(\pi r)}\left(\frac{2}{x}\right)^{2ir}
\frac{\Gamma(1/4+ir)\Gamma(3/4+ir)}{\Gamma(1+2ir)}\\ \times \sin\left( \pi(1/4-ir)\right) {}_2F_{1}\left(1/4+ir,3/4+ir,1+2ir;\frac{4}{x^2} \right)dr
\end{multline}
and $V(m,r)$ is given by \eqref{approx.fun.eq.Vdef}. As in the beginning of the proof of  \cite[Lemma 5.2]{BF2mom}, we first  truncate the series over $n$ introducing a negligible error term, and then apply an asymptotic expansion  \cite[Lemma 4.6]{BF2mom} for the hypergeometric function in \eqref{I(m,x)def}. Computations in
\cite[Lemma 5.2]{BF2mom} show that we can truncate the sum at $n\ll2m+T^{\epsilon}m/G^2$ with negligibly small error term.
If $n\gg2m+T^{\epsilon}m/G^2$ then (see \cite[(5.19)]{BF2mom}) one has
\begin{equation}
G^2\arcosh^2\frac{n}{2m}\gg G^2\frac{|n-2m|}{2m}\gg T^{\epsilon}.
\end{equation}
Therefore,
\begin{equation}\label{M2 est2}
\M_2\ll T^{1+\epsilon}G+\sum_{m\ll T^{1+\epsilon}}\frac{1}{m^{1/2}}
\sum_{0<n-2m\ll T^{\epsilon}m/G^2}\frac{1}{n^{1/2}}\Zag_{n^2-4m^2}(1/2)I\left(m,\frac{n}{m}\right).
\end{equation}
Furthermore, the following estimate (see \cite[(5.6)]{BF2mom}) holds:
\begin{equation}\label{M2 est3.0}
\sum_{m\ll T^{1+\epsilon}}\frac{1}{m^{1/2}}\sum_{2m<n\ll 2m+T^{\epsilon}m/G^2}\frac{1}{n^{1/2}}\Zag_{n^2-4m^2}(1/2)I\left(m,\frac{n}{m}\right)\ll
\frac{T^{3/2+2\theta+\epsilon}}{G^{1/2+2\theta}}.
\end{equation}
We would like to simplify $I\left(m,\frac{n}{m}\right)$ on the right-hand side of \eqref{M2 est2}.
In view of \eqref{M2 est3.0},  it is sufficient to consider only the main term in the asymptotic expansion of $I\left(m,\frac{n}{m}\right)$ in terms of $T$ because the next term is smaller than $T^{1/2+2\theta+\epsilon}/G^{1/2+2\theta}\ll T^{1+\epsilon}G$. For simplicity, we keep the notation $I(m,x)$ for the main term without changing it at each step of the proof. Applying \cite[Lemma 4.6]{BF2mom} we show that
\begin{multline}\label{I(m,x)eq1}
I(m,x)=\left(\frac{x^2}{x^2-4}\right)^{1/4}\int_{-\infty}^{\infty}\frac{rh(r)V(m,r)}{\cosh(\pi r)}2^{2ir}
\frac{\Gamma(1/4+ir)\Gamma(3/4+ir)}{\Gamma(1+2ir)}\\ \times \sin\left( \pi(1/4-ir)\right)
\exp(-2ir\arcosh(x/2))dr.
\end{multline}
Using \eqref{Stirling2} we have
\begin{equation}\label{I(m,x)eq2}
I(m,x)=\left(\frac{x^2}{x^2-4}\right)^{1/4}\int_{-\infty}^{\infty}|r|^{1/2}h(r)V(m,r)
\exp(-2ir\arcosh(x/2))dr.
\end{equation}
Let $A(x):=\arcosh{x/2}.$ Making the change of variables $r=T+Gy$ and using \eqref{hN def}, we show that
\begin{equation}\label{I(m,x)eq3}
I(m,x)=\left(\frac{x^2}{x^2-4}\right)^{1/4}
GT^{1/2}\exp(-2iTA(x))\int_{-\infty}^{\infty}\exp(-y^2-2iGyA(x))V(m,T+Gy)dy.
\end{equation}
The integral above was evaluated in \cite[(5.13)]{BF2mom}, where the right-hand side is equal to $V(m,T)$, and therefore,
\begin{equation}\label{I(m,x)eq4}
I(m,x)=\left(\frac{x^2}{x^2-4}\right)^{1/4}
GT^{1/2}\exp(-2iTA(x))\exp\left(-G^2A(x)^2\right)V(m,T).
\end{equation}
Since for $n>4m$ we have $G^2A(n/m)\gg G^2$ and for  $2m<n<4m$
\begin{equation}
G^2A(n/m)=G^2\arcosh^2\frac{n}{2m}>\frac{n-2m}{2m}G^2,
\end{equation}
we conclude that $I(m,n/m)$ is negligible unless $0<n-2m\ll T^{\epsilon}m/G^2$.
Furthermore, it is convenient to remove the factor $GT^{1/2}\left(\frac{x^2}{x^2-4}\right)^{1/4}$  in \eqref{I(m,x)eq4}, thus obtaining
\begin{equation}\label{M2 est3}
\M_2\ll T^{1+\epsilon}G+GT^{1/2}\sum_{m\ll T^{1+\epsilon}}\frac{1}{m^{1/2}}
\sum_{0<n-2m\ll T^{\epsilon}m/G^2}\frac{\Zag_{n^2-4m^2}(1/2)}{(n^2-4m^2)^{1/4}}I\left(m,\frac{n}{m}\right),
\end{equation}
where
\begin{equation}\label{I(m,x)eq5}
I(m,x)=\exp(-2iT\arcosh(x/2))\exp\left(-G^2\arcosh(x/2)^2\right)V(m,T).
\end{equation}
Note that if $G\gg T^{1/2+\epsilon}$ the sum over $n$ in \eqref{M2 est3} is empty and we prove \eqref{mean Lindelof}. Hence from now on we assume that $G\ll T^{1/2+\epsilon}$. Let
\begin{equation}\label{M0 def}
M_0:=T^{1/(2+4\theta)-\epsilon}G^{(3+4\theta)/(2+4\theta)}.
\end{equation}
Applying \eqref{eq:subconvexity} we prove that
\begin{equation}\label{M2 est4}
GT^{1/2}\sum_{m\ll M_0}\frac{1}{m^{1/2}}
\sum_{0<n-2m\ll T^{\epsilon}m/G^2}\frac{\Zag_{n^2-4m^2}(1/2)}{(n^2-4m^2)^{1/4}}I\left(m,\frac{n}{m}\right)\ll
T^{1+\epsilon}G.
\end{equation}
Consequently,
\begin{equation}\label{M2 est5}
\M_2\ll T^{1+\epsilon}G+GT^{1/2}\sum_{M_0<m\ll T^{1+\epsilon}}\frac{1}{m^{1/2}}
\sum_{0<n-2m\ll T^{\epsilon}m/G^2}\frac{\Zag_{n^2-4m^2}(1/2)}{(n^2-4m^2)^{1/4}}I\left(m,\frac{n}{m}\right).
\end{equation}
Note that $M_0\ll T^{1+\epsilon}$ only if $G\ll T^{(1+4\theta)/(3+4\theta)+\epsilon}$, which we assume from now on.
Let
\begin{equation}\label{alpha def}
\al:=\frac{1-4\theta}{3+4\theta}.
\end{equation}
Using \eqref{eq:subconvexity} we have
\begin{equation}\label{M2 est6}
GT^{1/2}\sum_{M_0<m\ll T^{1+\epsilon}}\frac{1}{m^{1/2}}
\sum_{0<n-2m\ll T^{\al}}\frac{\Zag_{n^2-4m^2}(1/2)}{(n^2-4m^2)^{1/4}}I\left(m,\frac{n}{m}\right)\ll
T^{1+\epsilon}G,
\end{equation}
which yields
\begin{equation}\label{M2 est6}
\M_2\ll T^{1+\epsilon}G+GT^{1/2}\sum_{M_0<m\ll T^{1+\epsilon}}\frac{1}{m^{1/2}}
\sum_{T^{\al}<n-2m\ll T^{\epsilon}m/G^2}\frac{\Zag_{n^2-4m^2}(1/2)}{(n^2-4m^2)^{1/4}}I\left(m,\frac{n}{m}\right).
\end{equation}
Now we make the following change of variables in \eqref{M2 est6}:
\begin{equation}\label{change of variables1}
n-2m=q,\quad
n+2m=r,
\end{equation}
showing that
\begin{equation}\label{M2 est7}
\M_2\ll T^{1+\epsilon}G+GT^{1/2}\sum_{T^{\al}<q\ll T^{1+\epsilon}/G^2}
\sum_{\substack{qG^{2-\epsilon}\ll r\ll T^{1+\epsilon}\\ r\equiv q\pmod{4}}}
\frac{\Zag_{qr}(1/2)}{(qr)^{1/4}(r-q)^{1/2}}I\left(\frac{r-q}{4},2\frac{r+q}{r-q}\right).
\end{equation}
Note that we can also add the condition  $r>q+4M_0$. First, we would like to remove the condition $r\equiv q\pmod{4}$ in \eqref{M2 est7}.  To do this, we rely on the fact that $\Zag_{qr}(s)=0$ if $qr \equiv 2,3 \Mod{4}.$ Therefore, for any coefficient $f(q,r)$ the following equality holds:
\begin{multline}\label{removing r=q(4)}
\sum_{\substack{q,r=1\\r\equiv q\pmod{4}}}^{\infty}\Zag_{qr}(1/2)f(q,r)=
\sum_{q,r=1}^{\infty}\Zag_{qr}(1/2)f(q,r)-
\sum_{q=1}^{\infty}\sum_{\substack{r=1\\r\equiv 0\pmod{4}}}^{\infty}\Zag_{qr}(1/2)f(q,r)\\-
\sum_{r=1}^{\infty}\sum_{\substack{q=1\\q\equiv 0\pmod{4}}}^{\infty}\Zag_{qr}(1/2)f(q,r)+
2\sum_{\substack{q=1\\q\equiv 0\pmod{4}}}^{\infty}\sum_{\substack{r=1\\r\equiv 0\pmod{4}}}^{\infty}\Zag_{qr}(1/2)f(q,r).
\end{multline}
Applying \eqref{removing r=q(4)} we decompose \eqref{M2 est7}, proving that
\begin{equation}\label{M2 to DS}
\M_2\ll T^{1+\epsilon}G+GT^{1/2}\left(\DS_{\ast,\ast}-\DS_{\ast,4}-\DS_{4,\ast}+2\DS_{4,4}\right),
\end{equation}
where
\begin{equation}\label{DS def}
\DS_{a,b}=\sum_{\substack{T^{\al}<q\ll T^{1+\epsilon}/G^2\\ q\equiv a\pmod{4}\\}}
\sum_{\substack{qG^{2-\epsilon}\ll r\ll T^{1+\epsilon}\\ r\equiv b\pmod{4}}}
\frac{\Zag_{qr}(1/2)}{(qr)^{1/4}(r-q)^{1/2}}I\left(\frac{r-q}{4},2\frac{r+q}{r-q}\right)
\end{equation}
and the condition $n\equiv\ast\pmod{4}$ means that we are summing over all $n$.
We will consider only the first double sum  $\DS_{\ast,\ast}$. The three remaining sums can be treated in the same way.

Note that the sum over $q$ in \eqref{DS def} is much shorter than the one over $r$. So we perform one more change of variables $r=l/q$, showing that
\begin{equation}\label{DSaa eq1}
\DS_{\ast,\ast}=\sum_{T^{\al}<q\ll T^{1+\epsilon}/G^2}
\sum_{\substack{q^2G^{2-\epsilon}\ll l\ll qT^{1+\epsilon}\\ l\equiv 0\pmod{q}}}
\frac{\Zag_{l}(1/2)}{l^{1/4}(l/q-q)^{1/2}}I\left(\frac{l/q-q}{4},2\frac{l/q+q}{l/q-q}\right).
\end{equation}
Now we express the condition $l\equiv 0\pmod{q}$ in terms of additive harmonics:
\begin{equation}\label{deltaq(l)}
\delta_q(l)=\frac{1}{q}\sum_{c|q}\sum_{\substack{a\pmod{c}\\(a,c)=1}}e\left(\frac{al}{c}\right),
\end{equation}
which yields the equality
\begin{equation}\label{DSaa eq2.1}
\DS_{\ast,\ast}=\sum_{T^{\al}<q\ll T^{1+\epsilon}/G^2}\frac{1}{q^{1/2}}\sum_{c|q}\sum_{\substack{a\pmod{c}\\(a,c)=1}}
\sum_{q^2G^{2-\epsilon}\ll l\ll qT^{1+\epsilon}}
\frac{\Zag_{l}(1/2)e\left(\frac{al}{c}\right)}{l^{1/4}(l-q^2)^{1/2}}I\left(\frac{l-q^2}{4q},2\frac{l+q^2}{l-q^2}\right).
\end{equation}
Changing the order of summation we have
\begin{multline}\label{DSaa eq2}
\DS_{\ast,\ast}=\sum_{c\ll T^{1+\epsilon}/G^2}
\sum_{T^{\al}/c<q\ll T^{1+\epsilon}/cG^2}
\frac{1}{(qc)^{1/2}}\sum_{\substack{a\pmod{c}\\(a,c)=1}}
\sum_{(qc)^2G^{2-\epsilon}\ll l\ll (qC)T^{1+\epsilon}}\Zag_{l}(1/2)\\\times
\frac{e\left(\frac{al}{c}\right)}{l^{1/4}(l-(qc)^2)^{1/2}}I\left(\frac{l-(qc)^2}{4qc},2\frac{l+(qc)^2}{l-(qc)^2}\right).
\end{multline}
Next, we apply the Voronoi summation formula to the sum over $l$ in \eqref{DSaa eq2}. To simplify some computations, we first decompose the sums over $c$, $q$ and $l$ into dyadic intervals:
\begin{equation}\label{DSaa dyadic}
\DS_{\ast,\ast}=\sum_{C\ll T^{1+\epsilon}/G^2}\sum_{T^{\al}/C\ll Q\ll T^{1+\epsilon}/CG^2}\sum_{(QC)^2G^{2-\epsilon}\ll L\ll QCT^{1+\epsilon}}
\DS_{\ast,\ast}(C,Q,L),
\end{equation}
\begin{equation}\label{DSaa CQL}
\DS_{\ast,\ast}(C,Q,L)=\sum_{c}U(c/C)\sum_{q}\frac{U(q/Q)}{(cq)^{1/2}}\sum_{\substack{a\pmod{c}\\(a,c)=1}}
\sum_{l}\frac{\Zag_{l}(1/2)}{l^{1/2}}\phi(c,q,l)e\left(\frac{al}{c}\right),
\end{equation}
where
\begin{equation}\label{DSaa phi def}
\phi(c,q,l)=\frac{U(l/L)l^{1/4}}{(l-(cq)^2)^{1/2}}I\left(\frac{l-(cq)^2}{4cq},2\frac{l+(cq)^2}{l-(cq)^2}\right).
\end{equation}
From now on we  assume  that $C,Q,L$ satisfy the conditions in  \eqref{DSaa dyadic}.

\begin{rema}\label{rem phi cases}
Arguing in the same way, we prove an analogue of \eqref{DSaa CQL} for $\DS_{\ast,4}(C,Q,L)$ with $\phi(c,q/(c,4),l)$, while for $\DS_{4,\ast}(C,Q,L)$ we  get $\phi(c,4q/(c,4),l)$ and for $\DS_{4,4}(C,Q,L)$  we get $\phi(c,4q/(c,16),l)$.
\end{rema}

\section{Integral transform in Voronoi's summation formula}
First, we evaluate and estimate the integral transforms of $\phi(y)$ that appear on the right-hand side of the Voronoi formula \eqref{phi hat+0 to Phipm def0}.
\begin{lem}
The following estimate holds:
\begin{equation}\label{phi+ est0}
\phi^{+}(1/2):=\int_0^{\infty}\frac{\phi(y)}{\sqrt{y}}dy\ll T^{-A}
\end{equation}
\end{lem}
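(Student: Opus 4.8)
The plan is to unwind the definition $\phi^{+}(1/2)=\int_0^{\infty}\phi(y)y^{-1/2}\,dy$ using the explicit shape of $\phi=\phi(c,q,l)$ from \eqref{DSaa phi def} (here $l$ plays the role of the variable $y$), and to show that the integral is negligible because the oscillatory factor coming from $I$ has no stationary point in the relevant range. Recall from \eqref{I(m,x)eq5} that the main term of $I(m,x)$ carries the phase $\exp(-2iT\arcosh(x/2))$ together with the Gaussian damping $\exp(-G^2\arcosh(x/2)^2)$ and the slowly varying factor $V(m,T)$. In our variables $m=(l-(cq)^2)/(4cq)$ and $x=2(l+(cq)^2)/(l-(cq)^2)$, so that $\arcosh(x/2)=\arcosh\!\big((l+(cq)^2)/(l-(cq)^2)\big)$; the Gaussian damping already forces $l$ to lie within a window of size $\ll (cq)^2G^{2-\epsilon}\cdots$ of the lower endpoint, consistent with the dyadic ranges in \eqref{DSaa dyadic}. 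I would first substitute $l=Lt$ (so the $U(l/L)$ cutoff makes the integrand compactly supported in $t\asymp 1$), compute the derivative of the total phase $f(t)=-\frac{T}{\pi}\arcosh\!\big((Lt+(cq)^2)/(Lt-(cq)^2)\big)$ with respect to $t$, and check that it is bounded below in absolute value throughout the support.

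The key computation is that $\tfrac{d}{dl}\arcosh\!\big((l+b)/(l-b)\big)$, with $b=(cq)^2$, equals (up to an absolute constant) $-\sqrt{b}\,\big(l(l-b)\big)^{-1/2}\cdot(l-b)^{-1}$ or a similar algebraic expression, which in the range $b\asymp (QC)^2$, $l\asymp L$ with $L\gg (QC)^2G^{2-\epsilon}$ is of size $\asymp \sqrt{b}/(L\sqrt{L})\cdot$(stuff); multiplying by $T$ and by $L$ (the Jacobian of $l=Lt$), the resulting lower bound for $|f'(t)|$ is a positive power of $T$ times a positive power of $G$, hence $\gg T^{\epsilon}$ certainly, in fact much larger. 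Since $V(m,T)$, the cutoffs, and the algebraic prefactors are all smooth and vary on the scale $\asymp 1$ in $t$ (using \eqref{Vestimate}, \eqref{Vapproximation} to control derivatives of $V(m,T)$ in the parameters — note $V(m,T)$ depends on $l$ only through $m=(l-b)/(4cq)$, smoothly), I can apply the integration-by-parts bound \eqref{I BKY est} of Lemma \ref{Lemma BKY} with $R$ a large positive power of $T$, $P,V\asymp 1$, and $Y$ polynomially bounded. Choosing the exponent $A$ in \eqref{I BKY est} large enough then gives the claimed $\ll T^{-A}$ for any $A$.

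I should also address the degenerate boundary region where $l$ is very close to $(cq)^2$ (so $x\to\infty$ and $\arcosh(x/2)\to\infty$): there the Gaussian factor $\exp(-G^2\arcosh(x/2)^2)$ is super-polynomially small, so that portion of the integral contributes negligibly without needing the oscillation at all; and the region $l\gg (cq)^2 T^{\epsilon}$ is likewise cut off, either by the support of $U(l/L)$ together with the dyadic constraint $L\ll QCT^{1+\epsilon}$, or, if one works before truncation, again by the decay of $I$ coming from \eqref{I(m,x)eq5} and the estimate $G^2\arcosh^2(x/2)\gg G^2(n-2m)/(2m)$ recorded just after \eqref{I(m,x)eq4}. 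So the argument splits into: (i) a negligible boundary layer handled by Gaussian decay, (ii) a negligible tail handled by the cutoffs, and (iii) the bulk $l\asymp L$ handled by non-stationary phase as above.

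The main obstacle I anticipate is bookkeeping the lower bound for $|f'|$ uniformly over the full ranges of $c\asymp C$, $q\asymp Q$ and $l\asymp L$ allowed by \eqref{DSaa dyadic}, and in particular verifying that this lower bound genuinely stays a positive power of $T$ (and does not degenerate when $G$ is as large as the assumed upper bound $G\ll T^{(1+4\theta)/(3+4\theta)+\epsilon}$, nor when $l$ approaches the edge $(cq)^2G^{2-\epsilon}$ of its range). Once the explicit algebraic formula for $\tfrac{d}{dl}\arcosh\!\big((l+(cq)^2)/(l-(cq)^2)\big)$ is in hand and its size is pinned down in terms of $L$ and $(cq)^2$, the rest is a routine application of Lemma \ref{Lemma BKY}, so the essential content is the stationary-point-free verification.
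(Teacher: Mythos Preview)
Your proposal is correct and follows essentially the same route as the paper: substitute to normalize the support, compute the $t$-derivative of the phase $-2T\arcosh\big((Lt+(cq)^2)/(Lt-(cq)^2)\big)$, verify it has size $\asymp TCQ/L^{1/2}\gg T^{1/2-\epsilon}$ throughout the support (so there is no stationary point), and then apply Lemma~\ref{Lemma BKY} with $R=TCQ/L^{1/2}$, $P=1$, $Y=R$, $V=T^{-\epsilon}$. Your separate discussion of the boundary layer near $l=(cq)^2$ and of the tail is unnecessary: the dyadic cutoff $U(l/L)$ together with the standing constraint $L\gg (QC)^2G^{2-\epsilon}$ from \eqref{DSaa dyadic} already keeps $l-(cq)^2\asymp L$, so no singularity or edge effect arises and the paper dispenses with that splitting.
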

\begin{proof}
Let
\begin{equation}\label{Ay def}
A(y):=\arcosh\left(\frac{yL+(cq)^2}{yL-(cq)^2}\right).
\end{equation}
Then using \eqref{DSaa phi def} and \eqref{I(m,x)eq5} we obtain
\begin{equation}\label{phi+ est1}
\phi^{+}(1/2)=L^{3/4}\int_0^{\infty}\frac{U(y)\exp(-2iTA(y))\exp\left(-G^2A^2(y)\right)}{y^{1/4}(yL-(cq)^2)^{1/2}}
V\left(\frac{yL-(cq)^2}{4cq},T\right)dy.
\end{equation}
To estimate this integral, we apply Lemma \ref{Lemma BKY}.  Straightforward computations show that
\begin{equation}\label{Ay deriv}
TA'(y)=\frac{-TcqL^{1/2}}{(Ly-(cq)^2)y^{1/2}}\sim\frac{TCQ}{L^{1/2}},
\end{equation}
and thus we set $$R=\frac{TCQ}{L^{1/2}}.$$ We also let  $P=1$ and $Y=R.$ Finally, we are left to choose the parameter $V$ in Lemma \ref{Lemma BKY}.
Since $y^jV^{(j)}(y,T)\ll T^{\epsilon}$ and
\begin{equation}\label{e(G2A2) deriv}
\frac{d}{dy}\exp\left(-G^2A^2(y)\right)\ll \exp\left(-G^2A^2(y)\right)G^2A(y)A'(y)\ll T^{\epsilon}GA'(y)\ll\frac{T^{\epsilon}GCQ}{L^{1/2}}\ll T^{2\epsilon},
\end{equation}
we set $V=T^{-\epsilon}$. Finally, applying Lemma \ref{Lemma BKY} we prove \eqref{phi+ est0}.
\end{proof}
\begin{lem}\label{lem phihat large n}
Let $\alpha$ be some fixed positive number. The following estimate holds:
\begin{equation}\label{phihat- est0}
\widehat{\phi}\left(-\frac{\alpha n}{c^2}\right)\ll (nT)^{-A}.
\end{equation}
Furthermore, for $n\gg T^{\epsilon}(TQC^2/L)^2$ we have
\begin{equation}\label{phihat+ est0}
\widehat{\phi}\left(\frac{\alpha n}{c^2}\right)\ll T^{-A}.
\end{equation}
\end{lem}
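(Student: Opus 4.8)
The plan is to treat the two transforms separately, exploiting that the $K$-Bessel function entering $\Phi^{(-,+)}$ in \eqref{Phi++-+def} decays exponentially while $J_0$ and $Y_0$ entering $\Phi^{(+,+)}$ only oscillate. Throughout I use that on the support of $\phi(c,q,\cdot)$ one has $l\asymp L$, $c\asymp C$, $q\asymp Q$, and that by the dyadic constraints in \eqref{DSaa dyadic} also $(cq)^2\asymp(CQ)^2\ll L\,G^{-2+\epsilon}$, so that $l-(cq)^2\asymp L$; moreover $|\phi(c,q,l)|\ll L^{-1/4}T^{\epsilon}$ by \eqref{DSaa phi def}, \eqref{I(m,x)eq5} and \eqref{Vestimate}. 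For \eqref{phihat- est0} I substitute $l=yL$ in \eqref{phi hat+0 to Phipm def0} exactly as in \eqref{phi+ est1}, so that the integration variable $y$ runs over the support of $U$. Then the argument of $\Phi^{(-,+)}$ is $\asymp\alpha n L/C^{2}\gg nQ^{2}G^{2-\epsilon}\gg nG^{2-\epsilon}$, and the bound $K_{0}(z)\ll z^{-1/2}e^{-z}$ gives
\begin{equation*}
\Phi^{(-,+)}\left(\frac{\alpha n yL}{c^{2}}\right)\ll\left(\frac{nL}{C^{2}}\right)^{1/4}\exp\left(-c_{0}\sqrt{n}\,G^{1-\epsilon}\right)
\end{equation*}
for an absolute $c_{0}>0$. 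Integrating trivially over $y\asymp1$ and using $|\phi|\ll L^{-1/4}T^{\epsilon}$ yields $\widehat{\phi}(-\alpha n/c^{2})\ll T^{\epsilon}n^{1/4}\exp(-c_{0}\sqrt{n}\,G^{1-\epsilon})$, which is $\ll(nT)^{-A}$ since $G>T^{1/5+\epsilon}$.

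For \eqref{phihat+ est0} I again put $l=yL$ and use the asymptotic expansions of $J_{0}$ and $Y_{0}$ to write, for $w\gg1$,
\begin{equation*}
\Phi^{(+,+)}(w)=w^{1/4}\sum_{\pm}e\left(\pm\frac{\sqrt{w}}{\pi}\right)W_{\pm}(w)+O\left(w^{-B}\right),
\end{equation*}
where $B>0$ is arbitrary and the smooth amplitudes satisfy $w^{j}W_{\pm}^{(j)}(w)\ll1$; the error term is harmless since here $w\asymp\alpha nyL/c^{2}\gg nG^{2-\epsilon}\gg1$. Substituting $w=\alpha n yL/c^{2}$ into \eqref{phi hat+0 to Phipm def0} and inserting $\phi$ from \eqref{DSaa phi def}, \eqref{I(m,x)eq5}, the transform $\widehat{\phi}(\alpha n/c^{2})$ becomes, up to $O(T^{-A})$, a sum of two integrals $\int g(y)e(f(y))\,dy$ with
\begin{equation*}
f(y)=\pm\frac{\sqrt{\alpha n yL}}{\pi c}-\frac{T}{\pi}A(y),
\end{equation*}
$A(y)$ as in \eqref{Ay def}, where $g$ carries the slowly varying amplitudes: $U$, the power of $y$, $(yL-(cq)^{2})^{-1/2}$, the envelope $w^{1/4}$, $W_{\pm}$, $V((yL-(cq)^{2})/(4cq),T)$ and $\exp(-G^{2}A^{2}(y))$, all of which satisfy $y^{j}g^{(j)}(y)\ll T^{\epsilon}g(y)$ — the only point needing verification being $\exp(-G^{2}A^{2}(y))$, for which $G^{2}A(y)A'(y)\ll T^{\epsilon}$ exactly as in \eqref{e(G2A2) deriv}. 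In particular $|g(y)|\ll T^{\epsilon}n^{1/4}$.

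By \eqref{Ay deriv} one has $-TA'(y)\asymp TCQ/L^{1/2}>0$, while $\frac{d}{dy}\sqrt{\alpha n yL}/c\asymp\sqrt{nL}/C$; since $A'(y)<0$, in the $``+"$ integral the two contributions to $f'(y)$ have the same sign, and in the $``-"$ integral they fail to cancel as soon as $\sqrt{nL}/C\gg T^{\epsilon}\,TCQ/L^{1/2}$, i.e.\ as soon as $n\gg T^{2\epsilon}(TQC^{2}/L)^{2}$ — which is precisely (after renaming $\epsilon$) the hypothesis of the lemma. Hence $|f'(y)|\gg R:=\sqrt{nL}/C$ with no stationary point, and $f^{(i)}(y)\ll Y:=\sqrt{nL}/C$ for $i\ge1$. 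Applying \eqref{I BKY est} with $R=Y=\sqrt{nL}/C$, $P=1$, $V=T^{-\epsilon}$, $X\ll T^{\epsilon}n^{1/4}$ gives
\begin{equation*}
\widehat{\phi}\left(\frac{\alpha n}{c^{2}}\right)\ll T^{\epsilon}n^{1/4}\left(\frac{CT^{\epsilon}}{\sqrt{nL}}\right)^{A},
\end{equation*}
and since $n^{1/4}(C/\sqrt{nL})^{A}=n^{1/4-A/2}(C/L^{1/2})^{A}\ll(C/L^{1/2})^{A}\ll G^{-(1-\epsilon)A}$ for $A\ge1$, $n\ge1$ (using $L^{1/2}\gg QCG^{1-\epsilon/2}$), the estimate \eqref{phihat+ est0} follows from $G>T^{1/5+\epsilon}$. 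The same argument applies verbatim to the variants of $\phi$ listed in Remark \ref{rem phi cases}. The only genuinely delicate point is checking that the two phases in $f'(y)$ do not cancel; this is what pins down the precise shape $n\gg T^{\epsilon}(TQC^{2}/L)^{2}$ of the hypothesis, and the complementary range — where the phases balance and a genuine stationary point appears — is exactly what will have to be treated separately by the saddle-point method of Lemma \ref{Lemma Huxley}.
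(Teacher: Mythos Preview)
Your argument is correct. For \eqref{phihat- est0} it coincides with the paper's: both exploit the exponential decay of $K_{0}$ together with the lower bound $\sqrt{Ln}/C\gg G^{1-\epsilon}\sqrt{n}$ coming from the dyadic constraint $L\gg (QC)^{2}G^{2-\epsilon}$. For \eqref{phihat+ est0} you take a genuinely different route. The paper never expands $J_{0}$ and $Y_{0}$ asymptotically here; instead it integrates by parts $j$ times via \cite[Lemma~6.1]{Har}, which trades $Y_{0}+J_{0}$ for $Y_{j}+J_{j}$ and produces a factor $(Ly)^{-j/2}$, and then bounds $f^{(j)}$ pointwise by $(Tcq/L^{1/2})^{j}$ to reach $\widehat{\phi}(y)\ll y^{1/4}L^{1/2}(Tcq/(Ly^{1/2}))^{j}$; no phase analysis is needed. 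You instead pull out the oscillation $e(\pm\sqrt{w}/\pi)$ from the Bessel asymptotics and feed the combined phase $-TA(y)\pm\sqrt{\alpha n yL}/(\pi c)$ into Lemma~\ref{Lemma BKY}, checking that under the hypothesis $n\gg T^{\epsilon}(TQC^{2}/L)^{2}$ the two pieces of $f'(y)$ cannot cancel. Your method has the advantage of dovetailing with Lemma~\ref{lem phihat} (where the same expansion is used and the stationary point of the ``$-$'' phase is located), while the paper's approach avoids the need to verify the amplitude bounds $y^{j}g^{(j)}\ll T^{j\epsilon}g$---your verification of these via $|GA'|\ll T^{\epsilon}$ and $|GA|\,e^{-(GA)^{2}}\ll1$ is correct but is the one place where a reader might want a line more detail.
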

\begin{proof}
It follows from \eqref{Phi++-+def} and \eqref{phi hat+0 to Phipm def0} that
\begin{multline}\label{phihat- est1}
\widehat{\phi}\left(-\frac{\alpha n}{c^2}\right)\ll\int_0^{\infty}\frac{\phi(x)\sqrt{n}}{c\sqrt{x}}
K_{0}\left(\frac{2\sqrt{\alpha xn}}{c}\right)dx\ll
\frac{\sqrt{n}}{c}\\\times
\int_0^{\infty}
\frac{U(x/L)}{x^{1/4}(x-(cq)^2)^{1/2}}I\left(\frac{x-(cq)^2}{4cq},2\frac{x+(cq)^2}{x-(cq)^2}\right)
K_{0}\left(\frac{2\sqrt{\alpha xn}}{c}\right)dx.
\end{multline}
Conditions in  \eqref{DSaa dyadic} imply that
\begin{equation}\label{K0 argument}
\frac{\sqrt{\alpha xn}}{c}\sim\frac{\sqrt{Ln}}{C}\gg G^{1-\epsilon}\sqrt{n}.
\end{equation}
Using \eqref{phihat- est1}, \cite[10.25.3]{HMF} and \eqref{K0 argument} we prove  \eqref{phihat- est0}.
Let $y=\alpha n/c^2$. It follows from \eqref{Phi++-+def} and \eqref{phi hat+0 to Phipm def0} that
\begin{equation}\label{phihat+ est1}
\widehat{\phi}\left(y\right)=\frac{-y^{1/2}}{2^{1/2}}\int_0^{\infty}\frac{\phi(x)}{\sqrt{x}}
\left(Y_{0}(2\sqrt{xy})+J_{0}(2\sqrt{xy})\right)dx.
\end{equation}
Using \eqref{DSaa phi def}, \eqref{I(m,x)eq5} and \eqref{Ay def} we obtain
\begin{equation}\label{phihat+ est2}
\widehat{\phi}\left(y\right)=\frac{-y^{1/2}L^{3/4}}{2^{1/2}}\int_0^{\infty}f(x)
\left(Y_{0}(2\sqrt{Lxy})+J_{0}(2\sqrt{Lxy})\right)dx,
\end{equation}
where
\begin{equation}\label{phihat+ f def}
f(x)=\frac{U(x)\exp(-2iTA(x))\exp\left(-G^2A^2(x)\right)}{x^{1/4}(Lx-(cq)^2)^{1/2}}V\left(\frac{xL-(cq)^2}{4cq},T\right).
\end{equation}
Integration by parts (see \cite[Lemma 6.1]{Har}) shows that
\begin{equation}\label{phihat+ est3}
\widehat{\phi}\left(y\right)\ll \frac{y^{1/2}L^{3/4}}{(Ly)^{j/2}}\int_0^{\infty}\frac{d^{j}}{dx^j}\left(f(x)\right)
x^{j/2}\left(Y_{j}(2\sqrt{Lxy})+J_{j}(2\sqrt{Lxy})\right)dx.
\end{equation}
Using \eqref{Ay deriv} and the fact that $A(x)\ll G^{\epsilon-1}$ we obtain
\begin{equation}\label{phihat+ f jderiv}
\frac{d^{j}}{dx^j}\left(f(x)\right)\ll\left(\frac{Tcq}{L^{1/2}}\right)^j.
\end{equation}
Therefore,
\begin{equation}\label{phihat+ est4}
\widehat{\phi}\left(y\right)\ll y^{1/4}L^{1/2}\left(\frac{Tcq}{Ly^{1/2}}\right)^j.
\end{equation}
\end{proof}
\begin{lem}\label{lem phihat}
For $m\ll\frac{(Tc^2q)^2}{L^2}$ and  $m\gg\frac{(Tc^2q)^2}{L^2}$ we have
\begin{equation}\label{phihat asympt00}
\widehat{\phi}\left(\frac{m}{c^2}\right)\ll T^{-A}.
\end{equation}
For $\sqrt{m}\sim\frac{Tc^2q}{L}$ the asymptotic formula holds
\begin{equation}\label{phihat asympt0}
\widehat{\phi}\left(\frac{m}{c^2}\right)=
\frac{\exp\left(2ih(q,m)\right)}{L^{1/4}}F(c,q;m)
+O\left(\frac{L^{1/4}T^{\epsilon}}{Tcq}\right),
\end{equation}
where
\begin{equation}\label{phihat F def}
F(c,q;m)=\frac{U\left(x_0\right)}{x_0^{1/4}}
\exp\left(-G^2\arcosh^2\left(1+\frac{q\sqrt{m}}{T}\right)\right)
V\left(\frac{cT}{2\sqrt{m}},T\right)W(Lx_0m/c^2)
\end{equation}
with   $W(z)\ll 1,$  $W^{(j)}(z)\ll z^{-1-j}$  and
\begin{equation}\label{x0 def}
x_0=\frac{2Tqc^2}{L\sqrt{m}}+\frac{(cq)^2}{L},
\end{equation}
\begin{equation}\label{h(q,n) def}
h(q,m)=-T\arcosh\left(1+\frac{q\sqrt{m}}{T}\right)-
\sqrt{2Tq\sqrt{m}+mq^2}.
\end{equation}
\end{lem}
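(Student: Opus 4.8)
The plan is to evaluate $\widehat\phi(m/c^2)$ by the saddle-point method, starting from the representation \eqref{phihat+ est2} (with $f$ as in \eqref{phihat+ f def}): setting $y=m/c^2$ one has $\widehat\phi(m/c^2)=-\tfrac{y^{1/2}L^{3/4}}{2^{1/2}}\int_0^\infty f(x)\bigl(Y_0(2\sqrt{Lxy})+J_0(2\sqrt{Lxy})\bigr)\,dx$. On $\operatorname{supp}U\subset[1/2,2]$ one has $x\asymp1$, so the Bessel argument is $\asymp\sqrt{Lm}/c\asymp Tcq/\sqrt L$; since $c\asymp C$, $q\asymp Q$ and $L\ll QCT^{1+\epsilon}$ by \eqref{DSaa dyadic}, this is $\gg T^{1/2-\epsilon}$, so I may replace $Y_0+J_0$ by a long truncation of its asymptotic expansion, conveniently written through the Hankel functions as $\tfrac{1-i}{2}H_0^{(1)}+\tfrac{1+i}{2}H_0^{(2)}$ with $H_0^{(\pm)}(w)=\sqrt{2/(\pi w)}\,e^{\pm i(w-\pi/4)}\,\omega_\pm(w)$, $\omega_\pm(w)=1+O(1/w)$, $\omega_\pm^{(j)}(w)\ll w^{-1-j}$. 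This splits $\widehat\phi(m/c^2)$ into an $H_0^{(1)}$-piece with phase $2\sqrt{Lxy}-2TA(x)$ and an $H_0^{(2)}$-piece with phase $\Psi(x):=-2\sqrt{Lxy}-2TA(x)$, $A$ being as in \eqref{Ay def}.

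Since $-2TA'(x)=\tfrac{2TcqL^{1/2}}{(Lx-(cq)^2)\sqrt x}>0$ by \eqref{Ay deriv}, while $\tfrac{d}{dx}(2\sqrt{Lxy})>0$, the two contributions to the derivative of the $H_0^{(1)}$-phase add, so it is $\gg\sqrt{Lm}/c\gg T^{1/2-\epsilon}$ on the support; by Lemma \ref{Lemma BKY} the $H_0^{(1)}$-piece is $\ll T^{-A}$. (Here and below I use that the amplitude of the integrand varies slowly, $\Omega_g\asymp T^{-\epsilon}$: because $L\gg(cq)^2G^{2-\epsilon}$ one has $A(x)\asymp cq/\sqrt L$ on the support, so $G^2A^2(x)\ll G^\epsilon\ll T^\epsilon$ and hence $\exp(-G^2A^2(x))$, together with $x^{-1/4}$, $(Lx-(cq)^2)^{-1/2}$, $(Lxy)^{-1/4}$, $\omega_\pm$ and $V((xL-(cq)^2)/(4cq),T)$, has all its $x$-derivatives $\ll T^\epsilon$.) The same integration-by-parts argument treats the $H_0^{(2)}$-piece whenever $m$ is away from $(Tc^2q/L)^2$: for $m\gg T^\epsilon(Tc^2q/L)^2$ this is exactly \eqref{phihat+ est0} of Lemma \ref{lem phihat large n}, while for $m\ll T^{-\epsilon}(Tc^2q/L)^2$ one has $\tfrac{\sqrt m}{c}\ll T^{-\epsilon}\tfrac{Tcq}{L}\ll\tfrac{2Tcq}{Lx-(cq)^2}$ on the support, so $|\Psi'(x)|\gg Tcq/\sqrt L\gg T^{1/2-\epsilon}$ and Lemma \ref{Lemma BKY} yields \eqref{phihat asympt00}.

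It remains to treat the $H_0^{(2)}$-piece for $\sqrt m\asymp Tc^2q/L$. The equation $\Psi'(x)=0$ reduces to $Lx-(cq)^2=2Tc^2q/\sqrt m$, with unique solution $x_0$ as in \eqref{x0 def}. A short computation gives $A(x_0)=\arcosh(1+q\sqrt m/T)$ and $Lx_0m=c^2(2Tq\sqrt m+q^2m)$, whence $\Psi(x_0)=-2\sqrt{2Tq\sqrt m+mq^2}-2T\arcosh(1+q\sqrt m/T)=2h(q,m)$ with $h$ as in \eqref{h(q,n) def}; and, writing $u=Lx$, one finds $\Psi''(x_0)=-\tfrac{L^2m}{2Tc^3q\sqrt{Lx_0}}$, of size $\asymp Tcq/\sqrt L$. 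Moreover the only zero of $\Psi''$ lies near $x=3x_0$, hence outside $\operatorname{supp}U$ when $x_0$ is in its bulk. Thus, after a routine smooth localization separating a neighbourhood of $x_0$ (where Huxley's lemma is applied) from its complement (where $|\Psi'|\gg Tcq/\sqrt L$, so Lemma \ref{Lemma BKY} applies), the hypotheses \eqref{Hux fg conditions} of Lemma \ref{Lemma Huxley} hold with $\Theta_f\asymp Tcq/\sqrt L$ (which is $\gg T^{1/2-\epsilon}$), $\Omega_f\asymp1$, $\Omega_g\asymp T^{-\epsilon}$, $\kappa\asymp1$.

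Applying Lemma \ref{Lemma Huxley}, the main term equals $-\tfrac{y^{1/2}L^{3/4}}{2^{1/2}}\cdot\tfrac{1+i}{2}e^{i\pi/4}\cdot\tfrac{g(x_0)\,e(f(x_0)\pm1/8)}{\sqrt{|f''(x_0)|}}$, where $g$ is the amplitude and $f$ the phase (in the $e(\cdot)$ normalization) of the $H_0^{(2)}$-integrand. Substituting $Lx_0-(cq)^2=2Tc^2q/\sqrt m$ (so that the $V$-argument becomes $cT/(2\sqrt m)$ and $(Lx_0-(cq)^2)^{1/2}\asymp c(Tq/\sqrt m)^{1/2}$), $\sqrt{Lx_0y}=\sqrt z$ with $z=Lx_0m/c^2$, and $y^{1/2}=\sqrt m/c$, every power of $m$, $c$, $q$ and $T$ cancels, the oscillatory factor is $e^{2ih(q,m)}$, and what remains is $L^{-1/4}e^{2ih(q,m)}F(c,q;m)$ with $F$ as in \eqref{phihat F def} --- the explicit factor $U(x_0)x_0^{-1/4}\exp(-G^2\arcosh^2(1+q\sqrt m/T))V(cT/(2\sqrt m),T)$ being exactly what survives, and the remaining bounded smooth factor $W(z)$ absorbing all constant phases (from $e^{\pm i\pi/4}$, $e(\pm1/8)$ and $\tfrac{1+i}{2}$) together with the $\omega_-$-tail of the Bessel expansion and the sub-leading terms of the saddle-point expansion. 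Huxley's remainder $\ll\tfrac{\Omega_f^4}{\kappa^3\Theta_f^2}+\tfrac{\Omega_f}{\Theta_f^{3/2}}+\tfrac{\Omega_f^3}{\Theta_f^{3/2}\Omega_g^2}\ll T^\epsilon(Tcq/\sqrt L)^{-3/2}$, multiplied by the amplitude size $\ll(Tcq)^{-1/2}L^{-1/4}$ and by the prefactor $\asymp Tcq\,L^{-1/4}$, produces the stated error $O(L^{1/4}T^\epsilon/(Tcq))$. The main obstacle is precisely this final accounting: checking \eqref{Hux fg conditions} with the correct parameters --- in particular that the Gaussian and the $V$-factor do not spoil the required smoothness, which is where $L\gg(cq)^2G^{2-\epsilon}$ is used --- and then tracking the numerous constants and fractional powers carefully enough to see the main term collapse to the clean form \eqref{phihat asympt0}.
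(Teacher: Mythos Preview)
Your proof is correct and follows essentially the same route as the paper: the paper expands $Y_0+J_0$ directly via its sine/cosine asymptotic (its equation \eqref{Y0+J0}) to split $\widehat\phi(y)$ into two oscillatory integrals with phases $h_\pm(x)=-TA(x)\pm\sqrt{Lxy}$, bounds the $h_+$ term by Lemma~\ref{Lemma BKY}, and applies Huxley's Lemma~\ref{Lemma Huxley} to the $h_-$ term, arriving at the same saddle point $x_0$, the same value $h_-(x_0)=h(q,m)$, and the same second derivative $h_-''(x_0)=-yL^{3/2}/(4Tcq\sqrt{x_0})$, whence \eqref{phihat asympt0}. Your Hankel decomposition $Y_0+J_0=\tfrac{1-i}{2}H_0^{(1)}+\tfrac{1+i}{2}H_0^{(2)}$ is equivalent to this, with your $H_0^{(2)}$-piece corresponding to the paper's $h_-$; your explicit smooth localization around $x_0$ to ensure the lower bound on $|\Psi''|$ required by \eqref{Hux fg conditions} is a point the paper leaves implicit.
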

\begin{proof}
Let $y=m/c^2.$ Since $\sqrt{Lxy}\gg L^{1/2}/C\gg G^{1-\epsilon}$, the argument of Bessel functions in \eqref{phihat+ est2} is large and we are able to use \cite[8.451]{GR}. Accordingly,
\begin{multline}\label{Y0+J0}
Y_{0}(2\sqrt{Lxy})+J_{0}(2\sqrt{Lxy})=\\=\frac{2^{1/2}}{\pi^{1/2}(Lxy)^{1/4}}\left(
\sin(2\sqrt{xy})\sum_{j=0}^{k-1}\frac{d_j}{(Lxy)^j}+
\cos(2\sqrt{xy})\sum_{j=0}^{k-1}\frac{e_j}{(Lxy)^{j+1/2}}
\right)+O((Lxy)^{-k-1/4}).
\end{multline}
Substituting \eqref{Y0+J0} to \eqref{phihat+ est2}, we prove that
\begin{equation}\label{phihat est2}
\widehat{\phi}\left(y\right)=\frac{-y^{1/4}}{\sqrt{\pi}}\sum_{\pm}
\int_0^{\infty}g_{\pm}(x)\exp\left(2ih_{\pm}(x)\right)dx+O\left(\frac{y^{1/4}}{(Ly)^k}\right),
\end{equation}
where
\begin{equation}\label{phihat h def}
h_{\pm}(x)=-TA(x)\pm\sqrt{Lxy},
\end{equation}
\begin{equation}\label{phihat g def}
g_{\pm}(x)=\frac{U(x)\exp\left(-G^2A^2(x)\right)}{x^{1/2}(x-(cq)^2/L)^{1/2}}V\left(\frac{xL-(cq)^2}{4cq},T\right)W_{\pm}(Lxy),
\end{equation}
and $W_{\pm}(Lxy)$ are finite sums  like in \eqref{Y0+J0}.
Applying \eqref{Ay deriv} we obtain
\begin{equation}\label{hpm deriv}
h'_{\pm}(x)=\frac{TcqL^{1/2}}{(Lx-(cq)^2)x^{1/2}}\pm\frac{\sqrt{Ly}}{2\sqrt{x}}.
\end{equation}
Using \eqref{e(G2A2) deriv} we infer
\begin{equation}\label{h+g+ jderiv}
h^{(j)}_{+}(x)\sim \frac{TQC}{\sqrt{L}}+\sqrt{Ly}\ll T^{\epsilon}\frac{TQC}{\sqrt{L}},\quad
g^{(j)}_{+}(x)\ll \left(GQC/\sqrt{L}+T^{\epsilon}\right)^j\ll T^{\epsilon j}.
\end{equation}
Then Lemma \ref{Lemma BKY}  with
\begin{equation}\label{BKY Lem8.1 conditions}
X=T^{\epsilon},\,R=\frac{TQC}{\sqrt{L}},\,Y=T^{\epsilon}\frac{TQC}{\sqrt{L}},\,P=1,\, V=T^{-\epsilon}
\end{equation}
yields the estimate
\begin{equation}\label{phihat est3}
\int_0^{\infty}g_{+}(x)\exp\left(2ih_{+}(x)\right)dx\ll T^{\epsilon}\left(\frac{T^{\epsilon}\sqrt{L}}{TCQ}\right)^{A}\ll T^{-A/2}.
\end{equation}
In the case of $h_{-}(x)$ there exists the saddle point $x_0$ such that
\begin{equation}\label{h- saddlepoint=}
Lx_0-(cq)^2=\frac{2Tcq}{\sqrt{y}}\quad\hbox{or}\quad
x_0=\frac{2Tcq}{L\sqrt{y}}+\frac{(cq)^2}{L}.
\end{equation}
If $\sqrt{y}\gg\frac{Tcq}{L}$ or $\sqrt{y}\ll\frac{Tcq}{L}$, then $x_0$ lies outside the interval of integration, and this means that on this interval $h'_{-}(x)$ is large,
namely $$h^{(j)}_{-}(x)\sim \frac{TQC}{\sqrt{L}}.$$ Therefore, for such $y$ we obtain an estimate similar to \eqref{phihat est3}.
In case of $\sqrt{y}\sim\frac{Tcq}{L}$ we apply Lemma \ref{Lemma Huxley} (analogously it is possible to use \cite[Proposition 8.2]{BKY}) with
\begin{equation}\label{HUX conditions}
\Theta_f=Tcq/\sqrt{L},\,
\Omega_f=1,\,
\Omega_g=T^{-\epsilon},
\end{equation}
proving that
\begin{equation}\label{phihat est4}
\int_0^{\infty}g_{-}(x)\exp\left(2ih_{-}(x)\right)dx=
g_{-}(x_0)\frac{\sqrt{\pi}\exp\left(2ih_{-}(x_0)+\pi i/4\right)}{\sqrt{h''_{-}(x_0)}}+O\left(T^{\epsilon}\frac{L^{3/4}}{(Tcq)^{3/2}}\right).
\end{equation}
Straightforward computations yield the equalities:
\begin{equation}\label{phihat h-2der}
h''_{-}(x_0)=\frac{-yL^{3/2}}{4Tcq\sqrt{x_0}}
\end{equation}
\begin{equation}\label{phihat h-(x0)}
h_{-}(x_0)=-T\arcosh\left(1+\frac{cq\sqrt{y}}{T}\right)-
\sqrt{2Tcq\sqrt{y}+y(cq)^2}.
\end{equation}
\begin{equation}\label{phihat g-(x0)}
g_{-}(x_0)=\frac{L^{1/2}y^{1/4}}{x_0^{1/2}(2Tcq)^{1/2}}U(x_0)\exp\left(-G^2A^2(x_0)\right)V\left(\frac{T}{2\sqrt{y}},T\right)W_{\pm}(Lx_0y).
\end{equation}
Substituting \eqref{phihat est4} to \eqref{phihat est2} we obtain
\begin{multline}\label{phihat est5}
\widehat{\phi}\left(y\right)=-\sqrt{2}\exp\left(2ih_{-}(x_0)-\pi i/4\right)\frac{U(x_0)\exp\left(-G^2A^2(x_0)\right)}{L^{1/4}x_0^{1/4}}
V\left(\frac{T}{2\sqrt{y}},T\right)W_{-}(Lx_0y)\\
+O\left(T^{\epsilon}\frac{L^{3/4}y^{1/4}}{(Tcq)^{3/2}}\right).
\end{multline}
Since we do not specify the function $W_{-}(Lx_0y)$, we put all constants in it. In the error term we may replace $y^{1/4}$ by $(Tcq/L)^{1/2}$.
Finally, we show that
\begin{multline}\label{phihat est6}
\widehat{\phi}\left(y\right)=\exp\left(2ih_{-}(x_0)\right)U\left(x_0\right)
\frac{\exp\left(-G^2\arcosh^2\left(1+\frac{cq\sqrt{y}}{T}\right)\right)}{L^{1/4}x_0^{1/4}}
V\left(\frac{T}{2\sqrt{y}},T\right)W_{-}(Lx_0y)\\
+O\left(\frac{L^{1/4}T^{\epsilon}}{Tcq}\right).
\end{multline}
\end{proof}
\section{Proof of Theorem \ref{KY result}}
We split  the sum over $c$ in \eqref{DSaa CQL} into three cases:
\begin{equation}
c\equiv0\pmod{4},\quad (c,2)=1,\quad c\equiv2\pmod{4},
\end{equation}
and for each case apply the corresponding Voronoi summation formula given by Lemma \ref{Thm Voronoi an c0mod4}, Lemma \ref{Thm Voronoi an codd} or Lemma \ref{Thm Voronoi an c2mod4}, respectively.
According to the estimate \eqref{phi+ est0}, all main terms in the Voronoi formula are negligible. Moreover, the contribution of $n<0$ and $n\gg T^{\epsilon}(TQC^2)^2/L$  is also negligible by Lemma \ref{lem phihat large n}. Combining \eqref{STM 0 def}, \eqref{STM 1 def}, \eqref{STM 4 def}, \eqref{STM 5 def} and \eqref{STM 45 to STM1} we obtain
\begin{equation}\label{DSaa Voronoi eq1}
\DS_{\ast,\ast}(C,Q,L)\ll \DSV_{\ast,\ast,0}(C,Q,L)+
\DSV_{\ast,\ast,1}(C,Q,L)+\DSV_{\ast,\ast,2}(C,Q,L)+T^{-A},
\end{equation}
where
\begin{equation}\label{DSVaa0 def}
\DSV_{\ast,\ast,0}(C,Q,L)=
\sum_{q}\frac{U(q/Q)}{q^{1/2}}
\sum_{c\equiv0\pmod{4}}\frac{U(c/C)}{c^{1/2}}
\sum_{0<n\ll T^{\epsilon}(TQC^2)^2/L}\frac{\Zag_{n}(1/2)}{n^{1/2}}
\widehat{\phi}\left(\frac{4\pi^2 n}{c^2}\right)\STM_0(c,n),
\end{equation}
\begin{equation}\label{DSVaa1 def}
\DSV_{\ast,\ast,1}(C,Q,L)=
\sum_{q}\frac{U(q/Q)}{q^{1/2}}
\sum_{(c,2)=1}\frac{U(c/C)}{c^{1/2}}
\sum_{0<n\ll T^{\epsilon}(TQC^2)^2/L}\frac{\Zag_{4n}(1/2)}{n^{1/2}}
\widehat{\phi}\left(\frac{\pi^2 n}{c^2}\right)\STM_1(c,n),
\end{equation}
\begin{equation}\label{DSVaa2 def}
\DSV_{\ast,\ast,2}(C,Q,L)=
\sum_{q}\frac{U(q/Q)}{q^{1/2}}
\sum_{c\equiv2\pmod{4}}\frac{U(c/C)}{c^{1/2}}
\sum_{0<n\ll T^{\epsilon}(TQC^2)^2/L}\frac{\Zag_{4n}(1/2)}{n^{1/2}}
\widehat{\phi}\left(\frac{4\pi^2 n}{c^2}\right)\STM_1(c/2,n).
\end{equation}
The analysis of all three sums requires following the same steps (mainly due to \eqref{STM 45 to STM1}, \eqref{STM1 series}, \eqref{STM0 series2}), therefore we estimate only $\DSV_{\ast,\ast,0}(C,Q,L)$.
Applying Lemma \ref{lem phihat} we infer
\begin{multline}\label{DSVaa0 eq1}
\DSV_{\ast,\ast,0}(C,Q,L)=
\sum_{q}\frac{U(q/Q)}{q^{1/2}}
\sum_{c\equiv0\pmod{4}}\frac{U(c/C)}{c^{1/2}}
\sum_{n}
U\left(\frac{nL^2}{(TQC^2)^2}\right)
\frac{\Zag_{n}(1/2)\STM_0(c,n)}{n^{1/2}}\\\times
\left(\frac{\exp\left(2ih(q,4\pi^2 n)\right)}{L^{1/4}}F(c,q;4\pi^2 n)
+O\left(\frac{L^{1/4}T^{\epsilon}}{TCQ}\right)\right).
\end{multline}
To handle the contribution of the error term, we estimate $\STM_0(c,n)$ trivially by $c$, apply \eqref{LZag 2mom estimate}, proving that
\begin{multline}\label{DSVaa0 eq2}
\DSV_{\ast,\ast,0}(C,Q,L)=
\sum_{q}\frac{U(q/Q)}{q^{1/2}}
\sum_{n}
U\left(\frac{nL^2}{(TQC^2)^2}\right)
\frac{\Zag_{n}(1/2)}{n^{1/2}}\frac{\exp\left(2ih(q,4\pi^2 n)\right)}{L^{1/4}}\\\times
\sum_{c\equiv0\pmod{4}}\frac{U(c/C)\STM_0(c,n)}{c^{1/2}}F(c,q;4\pi^2 n)
+O\left(\frac{T^{\epsilon}C^{5/2}Q^{1/2}}{L^{3/4}}\right).
\end{multline}
Let $\W(c):=U(c/C)F(c,q;4\pi^2 n)$. Then using the Mellin inversion formula and \eqref{STM0 series2} we show that
\begin{multline}\label{DSVaa0 c sum}
\sum_{c\equiv0\pmod{4}}\frac{U(c/C)\STM_0(c,n)}{c^{1/2}}F(c,q;4\pi^2 n)=\frac{1}{2\pi i}\int_{(1/2)}\tilde{\W}(z)
\frac{L^{\ast}(z,n)}{\zeta_2(2z)}\overline{r_2(1/4+\bar{z}/2,n)}dz,
\end{multline}
where $\tilde{\W}(z)=\int_0^{\infty}\W(x)x^{z-1}dx.$ Making the change of variables $x=Cy$ and integrating by parts, we derive the estimate
$$\tilde{\W}(z)\ll C^{\Re{z}}((1+|z|)T^{-\epsilon})^{-A}.$$ Therefore, we can truncate the integral in \eqref{DSVaa0 c sum} at $\Im{z}=T^{2\epsilon}$ introducing a negligibly small error term. Applying \eqref{Last 2mom estimate} and \eqref{LZag 2mom estimate} we prove
\begin{multline}\label{DSVaa0  nc sum}
\sum_{n}U\left(\frac{nL^2}{(TQC^2)^2}\right)
\frac{\Zag_{n}(1/2)}{n^{1/2}}\frac{\exp\left(2ih(q,4\pi^2 n)\right)}{L^{1/4}}
\sum_{c\equiv0\pmod{4}}\frac{\W(c)\STM_0(c,n)}{c^{1/2}}\\\ll
\frac{C^{1/2}T^{\epsilon}}{L^{1/4}}\int_{-T^{2\epsilon}}^{T^{2\epsilon}}
\sum_{n}U\left(\frac{nL^2}{(TQC^2)^2}\right)
\frac{|\Zag_{n}(1/2)|}{n^{1/2}}|L^{\ast}(1/2+it,n)|dz+T^{-A}\ll
\frac{T^{1+\epsilon}QC^{5/2}}{L^{5/4}}.
\end{multline}
Substituting \eqref{DSVaa0  nc sum} to \eqref{DSVaa0 eq2} and taking into account that $C,Q,L$ satisfy the conditions in  \eqref{DSaa dyadic},  we obtain
\begin{equation}\label{DSVaa0 eq3}
\DSV_{\ast,\ast,0}(C,Q,L)\ll\frac{T^{1+\epsilon}Q^{3/2}C^{5/2}}{L^{5/4}}+
\frac{T^{\epsilon}C^{5/2}Q^{1/2}}{L^{3/4}}\ll\frac{T^{1+\epsilon}Q^{3/2}C^{5/2}}{L^{5/4}},
\end{equation}
which results in the estimate
 $$\DS_{\ast,\ast}(C,Q,L)\ll T^{1+\epsilon}Q^{3/2}C^{5/2}L^{-5/4}.$$
Substitution of this estimate into \eqref{DSaa dyadic} yields
\begin{equation}\label{DSaa estimate}
\DS_{\ast,\ast}\ll\sum_{C\ll T^{1+\epsilon}/G^2}\sum_{T^{\al}/C\ll Q\ll T^{1+\epsilon}/CG^2}\sum_{(QC)^2G^{2-\epsilon}\ll L\ll QCT^{1+\epsilon}}
\frac{T^{1+\epsilon}Q^{3/2}C^{5/2}}{L^{5/4}}\ll
\frac{T^{1+\epsilon}}{G^{5/2}}.
\end{equation}
Finally, substituting \eqref{DSaa estimate} to \eqref{M2 to DS} we show that
\begin{equation}\label{M2 final estimate}
\M_2\ll T^{1+\epsilon}G\left(1+\frac{T^{1/2}}{G^{5/2}}\right)\ll T^{1+\epsilon}G
\end{equation}
provided that $G\gg T^{1/5}.$ This completes the proof of Theorem \ref{KY result}.

\begin{rema}
While treating  $\DS_{\ast,4}$, $\DS_{4,\ast}$, $\DS_{4,4}$ (see \eqref{M2 to DS}, \eqref{DS def}) we  obtain analogues of \eqref{DSVaa0 eq1}. In fact, the only difference is that in $\exp\left(2ih(q,4\pi^2 n)\right)F(c,q;4\pi^2 n)$ one should replace $q$ (see Remark \ref{rem phi cases}) by $q/(c,4)$,  $4q/(c,4)$ or $4q/(c,16)$, respectively. Since
$c\equiv0\pmod{4}$,  changes required in the first and in the second cases are trivial. In the third case, one should split the sum over $c$ into three: $4\|c$, $8\|c$ and
$c\equiv0\pmod{16}$.  This will only result in insignificant change in the  multiple $r_2(s,n)$  in \eqref{STM0 series}, because now instead of $k\ge 2$ in \cite[(3.4)]{PRR} we have $k=2$, $k=3$ and $k\ge4.$
\end{rema}


\nocite{*}


\begin{thebibliography}{}

\bibitem{AHLQ}
\newblock
K. Aggarwal, R. Holowinsky, Y. Lin, Z. Qi, \emph{A Bessel delta method and exponential sums for $GL(2)$}, Q.J.Math., 71:3 (2020), 1143--1168.


\bibitem{BF2mom}
\newblock
O. Balkanova,  D. Frolenkov, \emph{Non-vanishing of Maass form symmetric square $L$-functions}, J. Math. Anal. Appl., 500:2 (2021), 125148 , 23 pp.

\bibitem{BFVoron}
\newblock
O. Balkanova,  D. Frolenkov, \emph{A Voronoi summation formula for non-holomorphic Maass forms of half-integral weight}, Monatsh. Math., 203 (2024), 733-764.


\bibitem{BKY}
\newblock
V. Blomer, R. Khan, M. P. Young, \emph{Distribution of mass of holomorphic cusp forms}, Duke
Math. J. 162 (2013), no. 14,2609--2644. MR 3127809.

\bibitem{CI}
\newblock
J. B. Conrey and H. Iwaniec, \emph{The cubic moment of central values of automorphic
$L$-functions}, Ann. of Math. (2) 151 (2000), 1175--1216.

\bibitem{GR}
\newblock
I.S. Gradshteyn and I.M. Ryzhik, \emph{ Table of Integrals, Series, and
Products}. Edited by A. Jeffrey and D. Zwillinger. Academic Press, New York,
7th edition, 2007.

\bibitem{Har}
\newblock
G.Harcos, \emph{Subconvex Bounds for Automorphic $L$-functions and Applications},
D.Sc. thesis, Hungarian Academy of Sciences, 2011.

\bibitem{HB}
\newblock
D. R. Heath-Brown,  \emph{A mean value estimate for real character sums}, Acta Arith. 72 (1995), no. 3, 235--275.

\bibitem{Hux}
\newblock
M. N. Huxley, \emph{On stationary phase integrals}, Glasgow Math. J. 36(3), 355--362, 1994.

\bibitem{Lam}
\newblock
J. W. C. Lam, \emph{The second moment of the central values of the symmetric square $L$-functions}, Ramanujan J. 38(1) (2015), 129--145.


\bibitem{Khan}
\newblock
R. Khan,  \emph{Non-vanishing of the symmetric square $L$-function at the central point.} Proc. London Math. Soc. (2010), 100 (3): 736--762.

\bibitem{KhYoung}
\newblock
R. Khan, M. Young \emph{Moments and hybrid subconvexity for symmetric-square $L$-functions},
Journal of the Institute of Mathematics of Jussieu,  2023; 22(5):2029-2073.


\bibitem{Ng}
\newblock
M.-H. Ng, \emph{Moments of automorphic L-functions}, Ph.D. thesis, University of Hong Kong, 2016.


\bibitem{HMF}
\newblock
F.W.J. Olver , D.W. Lozier, R.F. Boisvert and C.W. Clarke, \emph{{NIST}
{H}andbook of {M}athematical {F}unctions}, Cambridge University Press,
Cambridge 2010.

\bibitem{PRR}
\newblock
Y. Petridis, N. Raulf, M. Risager, \emph{Double Dirichlet series and quantum
unique ergodicity of weight one-half Eisenstein series}, Algebra Number Theory
Vol. 8 (2014), No. 7, 1539--1595.


\bibitem{Shimura}
\newblock
G. Shimura, \emph{On modular forms of half integral weight}, Ann. Math. (2) 97 , 440-481 (1973).

\bibitem{Strom}
\newblock
F. Str\"{o}mberg, \emph{Computation of Maass waveforms with nontrivial multiplier systems}, Math. Comp.
77 (2008), no. 264, 2375--2416.

\bibitem{TangXu}
\newblock
H. Tang, Z. Xu, \emph{Central value of the symmetric square $L$-functions related to Hecke-Maass forms} Lith. Math. J. 56.2 (2016), 251--267.

\bibitem{Y}
\newblock
M.P. Young, \emph{Weyl-type hybrid subconvexity bounds for twisted $L$-functions and Heegner points on shrinking sets}, J. Eur. Math. Soc. 19 (2017), 1545--1576.


\bibitem{Z}
\newblock
D. Zagier, \emph{ Modular forms whose Fourier coefficients involve zeta-functions of quadratic fields}. Modular
functions of one variable, VI (Proc. Second Internat. Conf., Univ. Bonn, Bonn, 1976), pp. 105--169.
Lecture Notes in Math., Vol. 627, Springer, Berlin, 1977.

\end{thebibliography}
\end{document}